\documentclass[12pt]{article}
\usepackage{amssymb,amsmath,amsfonts,amsthm}
\sloppy

\setlength{\headheight}{0mm} \setlength{\headsep}{0mm}
\setlength{\topmargin}{0mm} \setlength{\oddsidemargin}{0mm}
\setlength{\textwidth}{165mm} \setlength{\textheight}{240mm}

\newcounter{parag}

\newtheorem{lem}{Lemma}
\newtheorem*{theorem}{Theorem}

\newtheorem{prop}{Proposition}

\begin{document}

$\mbox{   }$ $\mbox{   }$$\mbox{   }$$\mbox{   }$$\mbox{   }$$\mbox{   }$$\mbox{   }$$\mbox{   }$$\mbox{   }$$\mbox{   }$$\mbox{   }$$\mbox{   }$$\mbox{   }$$\mbox{   }$$\mbox{   }$$\mbox{   }$$\mbox{   }$$\mbox{   }$$\mbox{   }$$\mbox{   }$$\mbox{   }$$\mbox{   }$$\mbox{   }$$\mbox{   }$$\mbox{   }$$\mbox{   }$$\mbox{   }$$\mbox{   }$$\mbox{   }$$\mbox{   }$$\mbox{   }$$\mbox{   }$$\mbox{   }$$\mbox{   }$$\mbox{   }$$\mbox{   }$$\mbox{   }$$\mbox{   }$$\mbox{   }$$\mbox{   }$$\mbox{   }$$\mbox{   }$$\mbox{   }$$\mbox{   }$$\mbox{   }$$\mbox{   }$$\mbox{   }$$\mbox{   }$$\mbox{   }$$\mbox{   }$$\mbox{   }$$\mbox{   }$$\mbox{   }$$\mbox{   }$$\mbox{   }$$\mbox{   }$$\mbox{   }$$\mbox{   }$$\mbox{   }$$\mbox{   }$$\mbox{   }$$\mbox{   }$$\mbox{   }$$\mbox{   }$$\mbox{   }$$\mbox{   }$$\mbox{   }$$\mbox{   }$$\mbox{   }$$\mbox{   }$$\mbox{   }$$\mbox{   }$$\mbox{   }$$\mbox{   }$ MSC 20D60, 20D08, 20D99

\medskip
\medskip
\medskip

\begin{center}
{\bf {\large The group $J_4 \times J_4$ is recognizable by spectrum}}

\medskip
\medskip
\medskip

\medskip
\medskip
\medskip

 I.B. Gorshkov, N.V. Maslova

\end{center}

\bigskip

{\bf Abstract} {\it The spectrum of a finite group is the set of its element orders. In this paper we prove that the direct product of
two copies of the finite simple sporadic group $J_4$ is uniquely determined by its spectrum in the class of all finite groups.
\smallskip

{\bf Keywords:} finite group, spectrum, recognizable group, non-simple group. \smallskip
}
\bigskip

\section{Introduction}

Let $G$ be a finite group. Denote by $\omega (G)$ the {\it spectrum} of~$G$, i.\,e. the set of all element orders of $G$.
Recall that $G$ is {\it recognizable by spectrum} (or simply {\it recognizable}) if every finite group $H$ with $\omega(H)=\omega(G)$ is isomorphic to $G$. A finite group $L$ is {\it isospectral} to $G$ if $\omega(L)=\omega(G)$.

Denote by $\pi(G)$ the set of all prime divisors of the
order of $G$.
If $g \in G$, then denote $\pi(g)=\pi(\langle g \rangle)$. Let $$\sigma(G) = max\{|\pi(g)| \mid g\in G\}.$$

In 1994 W.~Shi \cite{Shi} proved that if a finite group $G$ has a non-trivial solvable normal subgroup, then there are infinitely many finite groups whose are isospectral to $G$. Moreover, in 2012 V.~D.~Mazurov and W.~Shi \cite{MazurovShi} proved that there are infinitely many finite groups isospectral to a finite group $G$ if and only if there is a finite group $L$ such that $L$ is isospectral to $G$ and the solvable radical of $L$ is non-trivial. Thus, the socle of a recognizable finite group is a direct product of nonabelian simple groups. At the moment, for many finite nonabelian simple groups and their automorphism groups, it was proved that they are recognizable (see, for example, \cite{VasBig}). In 1997 V.~D.~Mazurov \cite[Theorem~2]{Mazurov3} proved that the direct product of two copies of the group $Sz(2^7)$ is recognizable by spectrum. In this paper we prove the following theorem.

\begin{theorem} The direct product of two copies of the finite simple sporadic group $J_4$ is recognizable by spectrum.

\end{theorem}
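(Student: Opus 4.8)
The plan is to take an arbitrary finite group $H$ with $\omega(H)=\omega(G)$, where $G=J_4\times J_4$, and to pin down its structure from the arithmetic of $\omega(G)$. First I would record the needed data on $\omega(J_4)$ from the ATLAS and use $\omega(G)=\{\operatorname{lcm}(a,b)\mid a,b\in\omega(J_4)\}$. The invariants I would extract are: $\pi(G)=\pi(J_4)=\{2,3,5,7,11,23,29,31,37,43\}$; the five primes in $P=\{23,29,31,37,43\}$ are isolated in the prime graph of $J_4$ (the only element order of $J_4$ divisible by $p\in P$ is $p$ itself), so in $G$ a product of two members of $P$ lies in $\omega(G)$ but no product of three does, and any element order divisible by two members of $P$ equals their product; and, crucially, $\sigma(G)=5$ with $\{2,3,5,7,11\}$ the unique $5$-element subset of $\pi(G)$ arising as the prime support of an element order (every support of size $3$ in $J_4$ contains $2$, and no support of size $5$ can contain an isolated prime). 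These ``realizable support'' constraints are invariants of $\omega$, so they pass to $H$.

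Next I would prove that the solvable radical $\operatorname{Rad}(H)$ is trivial. Assuming otherwise, I pick a minimal normal subgroup $V\le\operatorname{Rad}(H)$, an elementary abelian $p$-group, and study the action of $H$ on $V$. The engine is the bound $\sigma(H)=5$: using coprime action I would locate a $p'$-element $x$ acting on $V$ whose order has five distinct prime divisors (realizing $\{2,3,5,7,11\}$ when $p\in P$, or a support built from $P$ when $p$ is small) with $C_V(x)\neq 1$; then a nonzero fixed vector commutes with $x$ and yields an element whose order has six distinct prime divisors, contradicting $\sigma(H)=5$. The delicate point is the fixed-point-free (Frobenius) case, which must be excluded prime by prime over $p\in\pi(G)$, and I expect this case analysis to be one of the two main hurdles. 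Once $\operatorname{Rad}(H)=1$, the generalized Fitting subgroup gives $\operatorname{Soc}(H)=S_1\times\cdots\times S_m$, a product of nonabelian simple groups, with $\operatorname{Soc}(H)\trianglelefteq H\le\operatorname{Aut}(\operatorname{Soc}(H))$ and $C_H(\operatorname{Soc}(H))=1$.

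The heart of the argument is to show $m=2$ and $S_1\cong S_2\cong J_4$. Here I would exploit the support structure on $\pi(G)$: each prime of $P$ divides the order of some factor, and the size-$4$ supports containing a fixed $p\in P$ are exactly $\{p\}$ adjoined to the size-$3$ supports of $J_4$, so the ``link'' of $p$ reconstructs the support data of a single $J_4$ sitting in another factor, while the prohibition on three members of $P$ and the uniqueness of the support $\{2,3,5,7,11\}$ block both a single factor absorbing several large primes and a proliferation of small factors. Invoking the classification of finite simple groups together with the known spectra of the simple groups whose order is divisible by $37$ or by $43$, I would eliminate every configuration except $S_1\cong S_2\cong J_4$; this CFSG-driven elimination is the second and, I expect, principal obstacle. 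Finally, since $\operatorname{Out}(J_4)=1$ and the Schur multiplier of $J_4$ is trivial, $\operatorname{Aut}(\operatorname{Soc}(H))=(J_4\times J_4)\rtimes C_2$, so $H$ is either $J_4\times J_4$ or the wreath-type extension; the latter is excluded because a swap element $(a,b)\tau$ has order $2\,o(ab)$, and taking $o(ab)=16$ gives an element of order $32$, whereas one checks $32\notin\omega(G)$ (the maximal $2$-power order in $\omega(G)$ being $16$). Hence $H\cong J_4\times J_4$, as required.
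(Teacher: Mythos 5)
Your extracted invariants are correct ($\pi(G)=\pi(J_4)$, the primes $23,29,31,37,43$ occur only as isolated element orders of $J_4$, $\sigma(J_4\times J_4)=5$ with $\{2,3,5,7,11\}$ the unique five-element support, realized by $\operatorname{lcm}(66,35)$), and your endgame coincides with the paper's: $C_H(\operatorname{Soc}(H))=1$, $\operatorname{Aut}(J_4\times J_4)\cong J_4\wr C_2$, and the wreath product is excluded because it contains an element of order $32\notin\omega(G)$. But the overall plan has a genuine structural gap in its ordering. You propose to prove $\operatorname{Rad}(H)=1$ \emph{first}, by finding a $p'$-element $x$ with $|\pi(x)|=5$ and $C_V(x)\neq 1$ for a minimal normal $p$-subgroup $V$. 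A cyclic group acting coprimely on $V$ can perfectly well act fixed-point-freely, so to force $C_V(x)\neq 1$ you need the standard tool (the Mazurov/Zavarnitsine lemma): a Frobenius subgroup $F\rtimes\langle c\rangle$ above $V$ with $F$ acting nontrivially on $V$, which then guarantees $C_V(c)\neq 1$. Exhibiting such Frobenius subgroups requires knowing the composition factors of $H/V$ — exactly the information you only obtain in your \emph{next} step. This is why the paper runs the argument in the opposite order: it first produces a quotient $\overline{G}\cong J_4\times J_4$ and only then kills the kernel, using the explicitly known Frobenius subgroups $29{:}28$, $43{:}7$, $23{:}11$, $31{:}5$, $11{:}5$ of $J_4$ (its Lemma on extensions of elementary abelian groups by $J_4\times J_4$), plus a Frattini argument for a non-solvable kernel. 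As written, your radical-trivial step cannot be carried out with the tools you name.

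The second gap is that the hardest part of the proof — showing that two composition factors isomorphic to $J_4$ actually exist — is compressed into ``CFSG-driven elimination'' guided by a heuristic about links of isolated primes. The paper's route here is quite different and substantially more involved: it proves a standalone proposition that a solvable group $K$ with $\sigma(K)=2$, $pq\in\omega(K)$ and $p\nmid q-1$ for all $p,q\in\pi(K)$ has $|\pi(K)|\le 3$, combines this with Hall/Zhang/Higman-type bounds to show $H$ has no Hall $\{29,31,37,43\}$-subgroup, deduces via a lemma of P.~Hall a composition factor $S$ whose order is divisible by two of these primes, and identifies $S\cong J_4$ from Zavarnitsine's classification of simple groups with narrow prime spectrum; producing the \emph{second} $J_4$ then requires a separate argument ($11$ divides $|G|/|S|$, non-cyclicity of Sylow subgroups, and a case analysis eliminating $L_2(23)$, $M_{23}$, $M_{24}$, $L_2(32)$, $U_3(11)$, $L_2(43)$, $L_2(29)$ as the ``other'' factor via their Hall subgroups). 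Your outline does not indicate how the prohibition on three isolated primes in one support translates into the existence of a second $J_4$ factor rather than, say, one $J_4$ together with several small simple factors; that translation is where the real work lies.
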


Note that if the direct product of $k$ copies of a finite group $G$ is recognizable by spectrum, then for each $i \le k$ the direct product of $i$ copies of $G$ is recognizable by spectrum. Thus, the following problems are of interest.

\medskip

\noindent{\bf Problem 1.} {\it Let $G$ be a finite group which is recognizable by spectrum. What is the largest number $k=k(G)$ such that the direct product of $k$ copies of the group $G$ is still recognizable by spectrum}?

\medskip

\noindent{\bf Problem 2.} {\it Is it true that for each integer $k \ge 1$ there exists a finite simple group $G=G(k)$ such that the direct product of $k$ copies of $G$ is recognizable by spectrum}?

\medskip

In proving Theorem, we use the following assertion which is interesting in its own right.

\begin{prop}\label{rzr2}
{Let $G$ be a finite solvable group such that $\sigma(G)=2$ and for any $p,q\in\pi(G)$ the following conditions hold{\rm:}}

{$(1)$ $p$ does not divide $q-1${\rm;}}

{$(2)$ $pq \in \omega(G)$.}

{Then $|\pi(G)| \le 3$.}

\end{prop}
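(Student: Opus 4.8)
The plan is to argue by contradiction: suppose $G$ is solvable, satisfies $(1)$ and $(2)$, yet $|\pi(G)|\ge 4$. Since each pair $pq\in\omega(G)$ is witnessed by a cyclic subgroup of order $pq$ that can be conjugated into a Hall subgroup, all four hypotheses pass to Hall subgroups, so it suffices to refute the case $|\pi(G)|=4$; I would set this up as induction on $|G|$ so that proper Hall subgroups (again satisfying the hypotheses) already have at most three prime divisors. Two easy reductions come first. Applying $(1)$ with $p=2$ gives $2\nmid q-1$ for all $q\in\pi(G)$, impossible for odd $q$; hence every prime in $\pi(G)$ is odd once $|\pi(G)|\ge 2$. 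Next, writing $F=F(G)$ for the Fitting subgroup, its Sylow subgroups $O_t(G)$ are normal and pairwise commute, so if three distinct primes divided $F$ one could multiply three commuting elements of pairwise coprime prime orders to obtain an element whose order has three prime divisors, contradicting $\sigma(G)=2$; thus $|\pi(F)|\le 2$.

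Since $G$ is solvable, $C_G(F)\le F$, so a Hall $\pi_1$-subgroup $H$, with $\pi_1=\pi(G)\setminus\pi(F)$, acts faithfully and coprimely on $F$. The decisive observation is that $(2)$ together with $\sigma(G)=2$ forces every $h\in H$ whose order has two distinct prime divisors to act fixed-point-freely on $F$: a nontrivial fixed point in some $O_t(G)$ would be a $t$-element commuting with $h$, producing an element whose order has three prime divisors. Hence on every (elementary abelian) chief factor of $F$, regarded as a faithful $\mathbb{F}_t H$-module, each two-prime element of $H$ has no nonzero fixed vector, while condition $(1)$, namely $p\nmid t-1$ for $p\in\pi_1$, means no element of prime order $p$ can act with a one-dimensional nontrivial constituent, so its nontrivial eigenvalues lie in Galois orbits of length $\ge 2$.

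The heart of the argument is to turn these constraints into the bound $|\pi_1|\le 3-|\pi(F)|$. For this I would exploit the centralizer structure: for an element $x$ of prime order $p$, no two primes different from $p$ can both occur in $C_G(x)$ and be joined by an edge, since an element of their product order would commute with $x$ and yield three prime divisors; thus the prime graph of $C_G(x)$ is a star with centre $p$. If some such $C_G(x)$ involved three primes other than $p$, the Hall subgroup of $C_G(x)$ on those three primes would be a solvable group in which every element has prime-power order, contradicting Higman's theorem that such a group has at most two prime divisors. The crux is therefore to produce, from the faithful fixed-point-free action and $(1)$, a prime-order element whose centralizer meets at least three further primes, equivalently to rule out the two surviving configurations $|\pi(F)|+|\pi_1|=1+3$ and $2+2$.

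This last step is where I expect the main difficulty. One must control fixed points on the normal subgroup $F$ rather than merely on full Sylow subgroups, since an element of order $pt$ need not place its $t$-part inside $O_t(G)$ (as already $S_4$ shows, where $O_2$ is properly smaller than a Sylow $2$-subgroup); it is precisely here that condition $(1)$ has to be used to collapse the metacyclic, Frobenius-complement-type behaviour of $H$ on $F$ to an essentially cyclic action, after which $\sigma(G)=2$ caps the number of primes of the resulting cyclic section (a cyclic group of order $\prod p_i$ has an element of that order, so $\sigma$ counts its primes) and forces $|\pi(G)|\le 3$. The induction on $|G|$ then closes the argument, the faithful coprime action allowing passage to $H$, which satisfies the same hypotheses with strictly fewer primes.
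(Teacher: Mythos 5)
Your opening reductions are sound and largely parallel the paper's: reducing to $|\pi(G)|=4$ via Hall subgroups (hypotheses (1), (2) and $\sigma\le 2$ do pass to Hall subgroups of a solvable group), observing that all primes in $\pi(G)$ are odd, noting that the prime graph of $C_G(x)$ for $x$ of prime order is a star, and invoking Higman's theorem to forbid a centralizer of a prime-order element from meeting three further primes. The claim that a two-prime-order element of the Hall $\pi_1$-subgroup acts fixed-point-freely on $F(G)$ is also correct.

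However, the proof has a genuine gap exactly where you yourself locate "the main difficulty": you never actually rule out the configurations $|\pi(F)|+|\pi_1|=1+3$ and $2+2$. The appeal to induction cannot do this, since for $|\pi(F)|=1$ the inductive bound $|\pi(H)|\le 3$ for the Hall $\pi_1$-subgroup $H$ is consistent with $|\pi(G)|=4$, and the phrase "collapse the Frobenius-complement-type behaviour of $H$ on $F$ to an essentially cyclic action" is not an argument. The paper's proof of this step is the bulk of the work: it builds a chain of minimal normal subgroups $H_1,H_2$ of distinct prime characteristics $p_1,p_2$, analyses whether $Soc(T_2)$ and $F(T_2)$ are cyclic, uses the Frobenius-group lemma (that $|H|$ divides $|F|-1$, which interacts with condition (1) to force various subgroups, including $\Omega_1$ of every Sylow subgroup, to be non-cyclic), Thompson's theorem on fixed-point-free automorphisms of prime order, and—decisively—a lemma on minimal polynomials of coprime actions: an element $h$ of prime order $p_4$ acting on a module in characteristic $p_j$ either has minimal polynomial $x^{p_4}-1$ (hence nontrivial fixed points) or forces $p=2$ and $p_4$ a Fermat prime, which is excluded because all primes here are odd. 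This is what manufactures simultaneous nontrivial fixed points of $h$ on both $H_1$ and $H_2$ and hence an element of order $p_1p_2p_4$, the final contradiction. Your proposal contains none of this machinery, so the central claim remains unproven; to complete it you would need, at minimum, the minimal-polynomial/fixed-point lemma (or an equivalent representation-theoretic input) together with the Frobenius and Thompson arguments that the paper deploys.
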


\noindent{\bf Remark.} The evaluation of Proposition~\ref{rzr2} is the best possible. Indeed, let $V_1$ and $H_1$ be the additive and the multiplicative groups of the field of order $3^{16}$, respectively,  $V_2$ and $H_2$ be the additive and the multiplicative groups of the field of order $81$, respectively. Assume that $H_i$ acts on $V_j$ by the following rules. Take $x\in H_i$ and $y \in V_j$. If $i=j$, then $x(y)=xy$. If $i \not = j$, then $x(y)=y$. Consider the group $G=(V_1\times V_2) \leftthreetimes (L_1\times L_2)$, where $L_1$ is the subgroup of order $17$ of $H_1$ and
$L_2$ is the subgroup of order $5$ of $H_2$. Then $\pi(G)=\{3,5,17\}$, $\sigma(G)=2$, and for any $p,q\in\pi(G)$, $p$ does not divide $q-1$ and $pq \in \omega(G)$.

\section{ Preliminaries}

Our terminology and notation are mostly standard and could be found in \cite{Atlas,Atlas2,Gorenstein}.

In this paper by ``group'' we mean ``a finite group''  and by ``graph'' we mean ``an
undirected graph without loops and multiple edges''.

Let $\pi$ be a set of primes. Denote by $\pi'$ the set of the primes
not in $\pi$. Given a natural $n$, denote by $\pi(n)$ the set of its
prime divisors.
A natural number $n$ with $\pi(n) \subseteq \pi$ is called a $\pi$-number.

Let~$G$ be a group.
Note that $\pi(G)$ is exactly $\pi(|G|)$. The spectrum of $G$ defines the {\it Gruenberg--Kegel} graph (or the {\it prime graph}) $GK(G)$ of G; in this graph the vertex set is $\pi(G)$, and different vertices $p$ and $q$ are adjacent in $GK(G)$ if and only if $pq$ is an element order of $G$.

A subgroup $H$ of a group $G$ is called a {\it Hall subgroup} if the numbers $|H|$ and $|G:H|$ are coprime. A group $G$ with $\pi(G) \subseteq \pi$ is called a $\pi$-group. A subgroup $H$ of a group $G$ is called a {\it $\pi$-Hall subgroup} if $\pi(H) \subseteq \pi$ and $\pi(|G : H|) \subseteq \pi'$. Note that $H$ is a $\pi$-Hall subgroup of a group $G$ if and only if $H$ is a Hall $\pi$-subgroup of $G$. We say that a finite group $G$ {\it has the property $E_\pi$} if $G$ contains a Hall $\pi$-subgroup. We say that a finite group $G$ {\it has the property $C_\pi$} if $G$ has the property $E_\pi$ and any two Hall $\pi$-subgroups of $G$ are conjugate in $G$. We denote by $E_\pi$ ($C_\pi$, respectively) the class of all groups $G$ such that $G$ has the property $E_\pi$ ($C_\pi$, respectively).

Recall that $Soc(G)$ and $F(G)$ denote
the socle (the subgroup generated by all the minimal non-trivial normal subgroups of $G$) and the Fitting subgroup (the largest nilpotent normal subgroup) of $G$, respectively.

For a prime $p$ and a $p$-group $G$, $\Omega_1(G)$ denotes the subgroup of $G$ generated by the set of all its elements of order $p$.

Recall that a group $H$ is a {\it section} of a group $G$ if there exist subgroups $L$ and $K$ of $G$ such that $L$ is normal in $K$ and $K/L\cong H$.

\begin{lem}[{\rm See \cite[Lemma~1]{Mazurov2} and \cite[Lemma~1]{Mazurov1}}]\label{LemmaMazurov}  Let a Frobenius group $H=F\rtimes C$ with kernel $F$ and cyclic complement $C=\langle c \rangle$ of order $n$ acts on a vector space $V$ of non-zero characteristic $p$ coprime to $|F|$. Assume that $F \not \le C_H(V)$. Then the correspondent semidirect product $V \rtimes C$ contains an element of order $pn$ and $dim \,C_V (\langle c \rangle) > 0$.
\end{lem}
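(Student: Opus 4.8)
The plan is to reduce \emph{both} conclusions of the lemma to a single statement about the norm operator attached to the cyclic complement, and then to prove that statement by a group-algebra identity encoding the Frobenius partition of $H$. Write $\rho\colon H \to GL(V)$ for the given action and work inside the group algebra $k[H]$, where $k$ is the ground field (of characteristic $p$); $\rho$ extends to an algebra homomorphism $k[H]\to\operatorname{End}_k(V)$. Set $\widehat{C}=\sum_{i=0}^{n-1}c^{i}\in k[H]$ and $N=\rho(\widehat{C})=\sum_{i=0}^{n-1}\rho(c)^{i}$.

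First I would record two reductions that make $N$ do all the work. For the fixed-point claim, note that $c\widehat{C}=\widehat{C}$ in $k[H]$, so $\rho(c)N=N$ and hence $\operatorname{Im}N\subseteq C_{V}(\langle c\rangle)$; thus $N\neq 0$ already forces $\dim C_{V}(\langle c\rangle)>0$. For the element-order claim, take any $v\in V$ with $N(v)\neq 0$ and put $g=(v,c)\in V\rtimes C$. A direct computation in the semidirect product gives $g^{n}=(N(v),1)$, an element of the $V$-part; since $V$ has exponent $p$ and $N(v)\neq 0$, this element has order $p$, while the projection $V\rtimes C\to C$ sends $g$ to $c$, forcing $n\mid\operatorname{ord}(g)$. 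Combining these, the order of $g^{n}$ in the cyclic group $\langle g\rangle$ equals $\operatorname{ord}(g)/n=p$, whence $\operatorname{ord}(g)=pn$. So everything comes down to proving $N\neq0$.

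The heart of the argument is therefore to show $N=\rho(\widehat C)\neq0$, and here the Frobenius hypothesis enters. I would argue by contradiction: assume $\rho(\widehat C)=0$. Using the unique factorisation $H=FC$ one has $\widehat H=\widehat F\,\widehat C$ in $k[H]$, whence $\rho(\widehat H)=\rho(\widehat F)\rho(\widehat C)=0$. On the other hand, the standard Frobenius partition $H\setminus F=\bigsqcup_{f\in F}\bigl(C^{f}\setminus\{1\}\bigr)$ (the complement is self-normalising, distinct conjugates meet trivially, and there are exactly $|F|$ of them) gives the alternative identity
\[
\widehat H=\widehat F+\sum_{f\in F}f\,\widehat C\,f^{-1}-|F|\cdot 1 .
\]
Applying $\rho$ and using $\rho(f\widehat C f^{-1})=\rho(f)\rho(\widehat C)\rho(f)^{-1}=0$, the middle sum dies and we obtain $0=\rho(\widehat H)=\rho(\widehat F)-|F|\cdot\mathrm{Id}$, i.e. $\rho(\widehat F)=|F|\cdot\mathrm{Id}$. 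Since $p\nmid|F|$, the operator $\tfrac1{|F|}\rho(\widehat F)$ is the idempotent projecting $V$ onto $C_{V}(F)$; the equality then says this projection is the identity, so $C_{V}(F)=V$ and $F$ acts trivially on $V$. This contradicts $F\not\le C_{H}(V)$, so $N\neq0$, which by the previous paragraph completes the proof.

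I expect the main obstacle to be assembling the two expressions for $\widehat H$ correctly and justifying the Frobenius partition with the right multiplicities, so that the displayed identity holds with every group element carrying coefficient $1$ on both sides (the identity element in particular is counted once by $\widehat F$, $|F|$ times by the conjugates $C^{f}$, and corrected by $-|F|\cdot 1$). Once that bookkeeping is in place, the coprimality $p\nmid|F|$ turns $\rho(\widehat F)$ into an honest projection and the contradiction is immediate. The reductions of the second paragraph are routine but must be stated carefully, since they are exactly what lets one deduce both conclusions of the lemma from the single inequality $N\neq0$.
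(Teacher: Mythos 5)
Your proposal is correct, and it is worth noting that the paper itself contains no proof of this lemma: it is imported wholesale by citation from Mazurov's papers, so the comparison is with the cited argument rather than with anything in this text. The standard proof (Mazurov's, and the one underlying the related Lemma~\ref{fact}) is module-theoretic: one passes to an irreducible submodule on which $F$ acts nontrivially, applies Clifford theory to the restriction to $F$, and uses the Frobenius structure to show that $\langle c\rangle$ permutes the homogeneous components regularly, so that $V$ contains a free $k\langle c\rangle$-summand and the minimal polynomial of $c$ is $x^{n}-1$; both conclusions then follow. Your route is genuinely different and more self-contained. The two reductions are sound: $c\widehat{C}=\widehat{C}$ gives $\operatorname{Im}N\subseteq C_{V}(\langle c\rangle)$, and the computation $(v,c)^{n}=(N(v),1)$ together with $n\mid\operatorname{ord}(g)$ and $\operatorname{ord}(g^{n})=\operatorname{ord}(g)/n=p$ yields $\operatorname{ord}(g)=pn$ (correctly, even when $p\mid n$, a case the lemma does not exclude: $\operatorname{ord}(g)$ divides $pn$, is divisible by $n$, and exceeds $n$). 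The partition identity checks out with the multiplicities you flag: the conjugates $C^{f}$, $f\in F$, are pairwise distinct since $N_{H}(C)=C$ and $C\cap F=1$, they intersect pairwise trivially, satisfy $C^{f}\cap F=1$ because $F$ is normal, and cover $H\setminus F$ by counting; so $\sum_{f\in F}f\widehat{C}f^{-1}=|F|\cdot 1+\sum_{h\in H\setminus F}h$, and together with $\widehat{H}=\widehat{F}\,\widehat{C}$ the assumption $\rho(\widehat{C})=0$ forces $\rho(\widehat{F})=|F|\cdot\mathrm{Id}$. Coprimality $p\nmid|F|$ then makes $|F|^{-1}\rho(\widehat{F})$ the projection onto $C_{V}(F)$, contradicting $F\not\le C_{H}(V)$. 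What your argument buys is elementarity and generality: no Clifford theory, no algebraic closure, no finite-dimensionality, no hypothesis $p\nmid n$. What it gives up is the stronger conclusion of the classical proof (minimal polynomial of $c$ equal to $x^{n}-1$), which is not needed for the two statements of this lemma but is exactly the refinement exploited elsewhere in the literature.
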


\begin{lem}[{\rm See \cite[Lemmas 3.3,~3.6]{VasBig}}]\label{fact}
Let $s$ and $p$ be distinct primes, a group $H$ be a semidirect
product of a normal $p$-subgroup $T$ and a cyclic subgroup $C=\langle g\rangle$ of order $s$,
and let $[T, g]\neq 1$. Suppose that $H$ acts faithfully on a vector space $V$ of
positive characteristic $t$ not equal to $p$.

If the minimal polynomial of $g$ on $V$ equals to $x^s-1$, then $C_V(g)$ is non-trivial.

If the minimal polynomial of $g$ on $V$ does not equal $x^s-1$, then

$($i$)$ $C_T(g)\neq 1$;

$($ii$)$ $T$ is nonabelian;

$($iii$)$ $p=2$ and $s = 2^{2^{\delta}}+1$ is a Fermat prime.
\end{lem}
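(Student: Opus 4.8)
The plan is to prove the first alternative directly and the three conclusions of the second alternative by contraposition. Throughout I use that $s\neq p$ (so $\langle g\rangle$ acts coprimely on $T$) and that $t\neq p$ forces the restriction $V|_T$ to be semisimple by Maschke's theorem; consequently $g$ permutes the homogeneous $T$-components of $V$, and since $s$ is prime each orbit has length $1$ or $s$. The easy alternative comes first: if the minimal polynomial of $g$ on $V$ equals $x^s-1$, then $x-1$ divides it, so $1$ is a root of the minimal polynomial and hence an eigenvalue of $g$, giving $C_V(g)=\ker(g-1)\neq 0$. (If $\operatorname{char}V=s$ this still holds, since then $x^s-1=(x-1)^s$ makes $g-1$ nilpotent on the nonzero space $V$.)

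For (i) I argue the contrapositive: assume $C_T(g)=1$ and deduce that the minimal polynomial is $x^s-1$. As $s$ is prime, $C_T(g)=1$ means $\langle g\rangle$ acts fixed-point-freely on $T$, so $H=T\rtimes\langle g\rangle$ is a Frobenius group with kernel $T$ and complement $\langle g\rangle$; in particular Lemma~\ref{LemmaMazurov} already yields $C_V(g)>0$. To get the full minimal polynomial I use Brauer's permutation lemma together with Glauberman's lemma for the coprime action of $\langle g\rangle$ on $T$: the only $g$-fixed irreducible character of $T$ is the trivial one, so $U^g\not\cong U$ for every nontrivial irreducible $T$-constituent $U$. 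Hence every nontrivial homogeneous $T$-component lies in a $\langle g\rangle$-orbit of length $s$. Choosing a nonzero vector $v$ in one such component, the vectors $v,gv,\dots,g^{s-1}v$ lie in distinct homogeneous components, are therefore linearly independent, and satisfy $g^sv=v$; they span a copy of the regular $\mathbb{F}_t\langle g\rangle$-module, whose minimal polynomial is $x^s-1$. Since the minimal polynomial of $g$ divides $x^s-1$ in any case, it equals $x^s-1$, as required.

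For (ii) I again argue the contrapositive and suppose $T$ is abelian. Over $\overline{\mathbb{F}_t}$ the group $T$ acts diagonally, so $V\otimes\overline{\mathbb{F}_t}=\bigoplus_\lambda V_\lambda$ decomposes into common eigenspaces indexed by the characters $\lambda$ of $T$ occurring in $V$, with $gV_\lambda=V_{\lambda^g}$. Faithfulness forces the occurring $\lambda$ to separate the points of $T$, and then $[T,g]\neq 1$ forces some occurring $\lambda\neq 1$ with $\lambda^g\neq\lambda$ (otherwise $g$ would centralise every $x\in T$). Its $\langle g\rangle$-orbit has length $s$, so exactly as in (i) the eigenvectors produce a regular $\langle g\rangle$-submodule and the minimal polynomial equals $x^s-1$.

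Finally (iii) is the substantive part. By (i) and (ii) I may assume $C_T(g)\neq 1$ and $T$ nonabelian, and by running the Case-A argument of (i) in reverse I may assume no nontrivial homogeneous $T$-component lies in an orbit of length $s$ (otherwise a regular submodule again gives minimal polynomial $x^s-1$); thus every nontrivial irreducible $T$-constituent $U$ satisfies $U^g\cong U$. Passing to one $g$-stable nontrivial homogeneous component and factoring out the kernel of the $T$-action, I reduce to $T$ having cyclic centre and a faithful irreducible module $U$ with $\dim U>1$, i.e.\ $T$ of symplectic type; here $g$ must centralise $Z(T)$ (else the central character of $U$ moves and $U^g\not\cong U$). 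Decomposing the symplectic space $T/Z(T)$ under the coprime action of $\langle g\rangle$ into its fixed part and its moving part $[T/Z(T),g]$, I reduce to an extraspecial section on which $g$ acts fixed-point-freely, with faithful irreducible constituent of dimension $p^m$ where $2m=\dim[T/Z(T),g]$. The heart of the matter is the eigenvalue distribution of $g$ on this Weil-type module: fixed-point-freeness gives $|\chi_U(g^i)|=1$ for all $g^i\neq 1$, so the multiplicity of the $s$-th root of unity $\zeta^j$ as an eigenvalue of $g$ is $\tfrac1s(p^m+\sum_{i\neq 0}\chi_U(g^i)\zeta^{-ij})$, and a standard Gauss-sum argument shows these $s$ nonnegative-integer multiplicities differ by at most $1$. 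Hence the minimal polynomial omits a factor (is $\neq x^s-1$) only if some multiplicity is $0$, which forces $\dim U=p^m<s$; since $s\mid p^m+1$, this yields $s=p^m+1$. The number theory then finishes: $s=p^m+1$ is an odd prime, so $p=2$ (for odd $p$ the number $p^m+1$ is even and exceeds $2$), and $2^m+1$ prime forces $m=2^\delta$, whence $s=2^{2^\delta}+1$ is a Fermat prime. I expect the main obstacle to be making the reduction to an extraspecial section and the eigenvalue-multiplicity count fully rigorous—in particular the equidistribution step that pins down $\dim U=s-1$—while the Frobenius and regular-module arguments for (i) and (ii) and the concluding number theory are routine by comparison.
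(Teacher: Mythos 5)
First, a point of comparison: the paper does not prove this lemma at all --- it is imported verbatim from \cite[Lemmas~3.3,~3.6]{VasBig}, so there is no in-paper argument to measure you against, only Vasil'ev's. Your proof of the first claim and of parts (i) and (ii) is correct and complete: the observation that $1$ is a root of the minimal polynomial (with the separate remark for $t=s$), the contrapositive for (i) via Glauberman plus Brauer's permutation lemma (coprime action with $C_T(g)=1$ fixes only the trivial character, so every nontrivial homogeneous $T$-component lies in an orbit of length $s$, and the orbit of a vector spans a regular $\langle g\rangle$-module), and the weight-space version of the same argument for abelian $T$ in (ii) are all sound, and this is essentially the standard Clifford-theoretic route.

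Part (iii), however, has genuine gaps, and they sit exactly where you predicted. (a) The reduction step ``cyclic centre and a faithful irreducible module, i.e.\ $T$ of symplectic type'' is false as stated: a $p$-group has a faithful irreducible representation if and only if its centre is cyclic, but such a group need not have class $2$, let alone symplectic type, so ``the symplectic space $T/Z(T)$'' need not exist. For instance $T=\mathbb{Z}_p\wr\mathbb{Z}_p$ has cyclic centre and a faithful irreducible module of dimension $p$, yet $T/Z(T)$ is nonabelian for $p\ge 3$ and carries no commutator symplectic form. Repairing this requires the Hall--Higman-style passage to a critical or minimal $g$-noncentralized subgroup (class $\le 2$, exponent $p$ for odd $p$, then extraspecial), which is the actual technical core of Vasil'ev's Lemma~3.6 and is entirely missing here. (b) The identity $|\chi_U(g^i)|^2=|C_{T/Z(T)}(g^i)|$, from which you get $|\chi_U(g^i)|=1$, is asserted rather than proved; it is a nontrivial lemma about extraspecial-type actions and must be established (or cited) for the section you reduce to, including when $Z(T)$ is cyclic of order larger than $p$. (c) Your eigenvalue-multiplicity count silently assumes $t\neq s$: the hypothesis only excludes $t=p$, and when $t=s$ there are no $s$-th roots of unity, $x^s-1=(x-1)^s$, character values $\chi_U(g^i)$ at $s$-singular elements are undefined, and the question becomes whether $g$ has a Jordan block of size $s$ --- precisely the classical Hall--Higman Theorem~B situation (already visible for $SL_2(3)$ on its natural module in characteristic $3$), which needs a separate unipotent analysis that your Gauss-sum argument cannot supply. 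The concluding number theory ($s=p^m+1$ prime forces $p=2$ and $m=2^{\delta}$) is fine, but as it stands (iii) is a correct road map rather than a proof.
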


\begin{lem}[{\rm See, for example, \cite{Busarkin_Gorchakov}}]\label{fr}
Let $G=F \rtimes H$ be a Frobenius group with kernel $F$ and complement $H$. Then the following statements hold.

$(1)$ The subgroup $F$ is the largest nilpotent normal subgroup of $G$, and $|H|$ divides $|F| - 1$.

$(2)$ Any subgroup of order $pq$ from $H$, where $p$ and $q$ are {\rm(}not necessarily distinct{\rm)} primes, is
cyclic. In particular, any Sylow subgroup of $H$ is a cyclic group or a {\rm(}generalized{\rm)} quaternion
group.

$(3)$ If the order of $H$ is even, then $H$ contains a unique involution.

$(4)$ If the subgroup $H$ is non-solvable, then it contains a normal subgroup $S \times Z$ of index $1$ or $2$, where
$S\cong SL_2(5)$ and $(|S|, |Z|)=1$.

\end{lem}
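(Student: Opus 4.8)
The key structural fact I would extract from the hypotheses at the outset is that the conjugation action of $H$ on $F$ is \emph{semiregular} (fixed-point-free): by the very definition of a Frobenius group, $H \cap H^f = 1$ for every $f \in F \setminus \{1\}$, which is equivalent to $C_F(h) = 1$ for all $h \in H \setminus \{1\}$ and to $C_H(f) = 1$ for all $f \in F \setminus \{1\}$. In particular $C_H(F) = 1$, so $H$ acts faithfully on $F$, and $\gcd(|F|,|H|) = 1$. For part $(1)$ I would take as known the two classical theorems that underlie the notion: Frobenius's theorem that $F$ is a normal subgroup with $G = F \rtimes H$, and Thompson's theorem that $F$ is nilpotent. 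The divisibility $|H| \mid |F| - 1$ is then immediate, since $H$ permutes the $|F| - 1$ non-trivial elements of $F$ by conjugation and semiregularity forces every orbit to have length exactly $|H|$. To see that $F$ is the \emph{largest} nilpotent normal subgroup, write $N = F(G)$; Dedekind's modular law gives $N = F \rtimes (N \cap H)$, and since $N$ is nilpotent while $F$ and $N \cap H$ have coprime orders, $F$ is a normal Hall subgroup of $N$ and hence a direct factor, $N = F \times (N \cap H)$. Thus $N \cap H$ centralizes $F$, and as $C_H(F) = 1$ this forces $N \cap H = 1$, i.e. $N = F$.

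For part $(2)$ the plan is to reduce the coprime fixed-point-free action of a small subgroup $K \le H$ to a faithful fixed-point-free action on a vector space. Since $K$ acts faithfully on the nilpotent group $F = \prod_r O_r(F)$, I can choose a prime $r \mid |F|$ on which a prescribed non-trivial subgroup of $K$ acts non-trivially; setting $R = O_r(F)$ (characteristic in $F$, hence $K$-invariant) and $V = R/\Phi(R)$, coprimality guarantees both that $C_V(x) = 0$ whenever $C_R(x) \le C_F(x) = 1$, i.e. for every $x \in K\setminus\{1\}$, and that the chosen subgroup still acts non-trivially on the $\mathbb{F}_r$-space $V$. Now suppose $|K| = pq$ with $K$ not cyclic. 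If $K \cong C_p \times C_p$, the standard generation formula for coprime action of a non-cyclic abelian $p$-group writes $V$ as the sum of the subspaces $C_V(A_i)$ over the $p+1$ subgroups $A_i$ of order $p$; each is $0$ by fixed-point-freeness, so $V = 0$, contradicting $R \neq 1$. If instead $p \neq q$ and $K$ is the non-abelian group of order $pq$, then $K$ is itself a Frobenius group with kernel $C_q$ and cyclic complement $\langle c\rangle$ of order $p$; choosing $r$ so that $C_q$ acts non-trivially and noting that $r$ is coprime to $q$, Lemma~\ref{LemmaMazurov} yields $\dim C_V(\langle c\rangle) > 0$, contradicting $C_V(c) = 0$. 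Hence $H$ contains no copy of $C_p \times C_p$ and no non-abelian subgroup of order $pq$. The first exclusion shows each Sylow subgroup has a unique subgroup of order $p$ and is therefore cyclic or (for $p=2$) generalized quaternion; together the two exclusions show every subgroup of order $pq$ is cyclic.

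Part $(3)$ I would prove directly from semiregularity. If $t \in H$ is an involution then $C_F(t) = 1$, and the usual trick --- the map $f \mapsto f^{-1} f^{t}$ is injective, hence bijective, and its image is inverted by $t$ --- shows that $t$ inverts every element of $F$. Consequently, if $t_1$ and $t_2$ are involutions of $H$, then $f^{t_1 t_2} = f$ for every $f \in F$, so $t_1 t_2 \in C_H(F) = 1$ and $t_1 = t_2$; thus $H$ has a unique involution.

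Finally, part $(4)$ is the classification of non-solvable Frobenius complements due to Zassenhaus, and it is the genuine obstacle. The point of departure is part $(2)$: every Sylow subgroup of $H$ is cyclic or generalized quaternion. A group all of whose Sylow subgroups are cyclic is metacyclic and hence solvable, so non-solvability forces the Sylow $2$-subgroup to be generalized quaternion. From there the argument depends on the deep theory of groups with generalized quaternion Sylow $2$-subgroups --- in particular the Brauer--Suzuki theorem, which blocks simplicity --- together with the identification of $SL_2(5)$, the binary icosahedral group, as essentially the only non-solvable group admitting a fixed-point-free action; one then extracts a normal subgroup $S \times Z$ of index at most $2$ with $S \cong SL_2(5)$ and $(|S|,|Z|) = 1$. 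Reproducing this in full would amount to redeveloping the Zassenhaus--Brauer--Suzuki machinery, so in practice I would invoke it as the cited classical result; this is the step I expect to be by far the hardest.
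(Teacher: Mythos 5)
Your proposal is correct, but there is nothing in the paper to compare it against line by line: the paper gives no proof of Lemma~\ref{fr} at all, quoting it as classical from the book of Busarkin and Gorchakov, so anything beyond the citation is your own added value. Your parts $(1)$ and $(3)$ are the standard arguments done correctly (semiregularity of the conjugation action, the Dedekind-plus-Hall-direct-factor argument identifying $F$ with $F(G)$, and the $f \mapsto f^{-1}f^t$ inversion trick). Part $(2)$ is where you genuinely diverge from the textbook treatments: instead of the usual cohomological/transfer route for Frobenius complements, you reduce the fixed-point-free coprime action to $V = O_r(F)/\Phi(O_r(F))$ and then kill $C_p \times C_p$ by the coprime generation formula $V = \sum_i C_V(A_i)$ and the non-abelian group of order $pq$ by the paper's own Lemma~\ref{LemmaMazurov} --- a nice economy, since that lemma is already in the toolkit here, and the hypotheses check out ($\operatorname{char} V = r$ divides $|F|$, hence is coprime to $q \mid |H|$, and fixed-point-freeness guarantees the kernel $C_q$ acts non-trivially). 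The one inference you state without proof, that excluding $C_p \times C_p$ forces a unique subgroup of order $p$ in each Sylow subgroup, is legitimate but deserves its one-line justification: a central subgroup $Z_0$ of order $p$ together with any $\langle x \rangle$ of order $p$ generates an abelian group of order at most $p^2$, which by hypothesis is cyclic, whence $\langle x \rangle = Z_0$; after that the Burnside classification of $p$-groups with a unique subgroup of order $p$ applies. For part $(4)$ you candidly cite Zassenhaus's classification (resting on Brauer--Suzuki) rather than reprove it, which is exactly the level of rigor the paper itself adopts for the whole lemma; so your write-up proves strictly more than the paper does while deferring only the genuinely deep step.
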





\begin{lem}[{\rm \cite[Lemma~1]{HallP}}]\label{hall}
Let $G$ be a finite group and $\pi$ be a set of primes. If $G\in E_{\pi}$, then $S \in E_{\pi}$ for every composition factor $S$ of $G$.
\end{lem}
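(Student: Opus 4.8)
The plan is to establish two closure properties of the class $E_\pi$ and then combine them by induction along a composition series. Fix a Hall $\pi$-subgroup $H$ of $G$, so that $|H|$ is a $\pi$-number and $|G:H|$ is a $\pi'$-number. First I would prove that $E_\pi$ is inherited by normal subgroups: if $N \trianglelefteq G$, then $H \cap N$ is a Hall $\pi$-subgroup of $N$. Indeed, $|H \cap N|$ divides $|H|$ and is thus a $\pi$-number, while by the second isomorphism theorem $H/(H\cap N) \cong HN/N$, so that $|N : H \cap N| = |HN : H|$; since $H \le HN \le G$, this index divides $|G:H|$ and is therefore a $\pi'$-number. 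Hence $N \in E_\pi$.

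Next I would prove that $E_\pi$ is inherited by quotients: for $N \trianglelefteq G$ the image $HN/N$ is a Hall $\pi$-subgroup of $G/N$. As a homomorphic image of the $\pi$-group $H$, the subgroup $HN/N \cong H/(H \cap N)$ is again a $\pi$-group, and its index $|G/N : HN/N| = |G : HN|$ divides $|G:H|$ and so is a $\pi'$-number; therefore $G/N \in E_\pi$.

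With these two steps available, the statement follows by induction along a composition series. A composition factor $S$ of $G$ is isomorphic to $K/L$ for some $L \trianglelefteq K$ with $K$ subnormal in $G$, say $K = K_0 \trianglelefteq K_1 \trianglelefteq \cdots \trianglelefteq K_m = G$. Repeated application of the normal-subgroup step along this chain gives $K \in E_\pi$, and then the quotient step applied to $L \trianglelefteq K$ yields $S \cong K/L \in E_\pi$. The argument is essentially routine; the only place demanding care is the index bookkeeping in the first step, where the identity $|N : H \cap N| = |HN : H|$ together with the divisibility $|HN : H| \mid |G:H|$ is exactly what forces the whole $\pi'$-part of $N$ into the index of $H \cap N$. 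Once this is secured the quotient step is immediate and the induction goes through without further difficulty.
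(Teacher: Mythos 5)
Your proof is correct and complete. The paper does not prove this lemma at all --- it is quoted as Lemma~1 of Ph.~Hall's \emph{Theorems like Sylow's} --- and your argument is precisely the standard one behind that citation: Hall $\pi$-subgroups pass to normal subgroups via the index identity $|N : H \cap N| = |HN : H|$ (where normality of $N$ is essential, both to make $HN$ a subgroup and because the statement fails for arbitrary subgroups) and to quotients via $HN/N$, after which induction along the subnormal chain of a composition series finishes the job.
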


\begin{lem}[{\rm See \cite{Gross1} and \cite{Gross2}}]\label{EpiCpi} Let $\pi$ be a set of primes such that $2 \not \in \pi$.  Then $E_\pi=C_\pi$.

\end{lem}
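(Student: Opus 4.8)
The plan is to argue by contradiction: assume $|\pi(G)|\ge 4$ and manufacture an element whose order is divisible by three distinct primes, contradicting $\sigma(G)=2$. I begin with two reductions. First, condition $(1)$ forces $G$ to have odd order: if $2\in\pi(G)$, then for any odd prime $s\in\pi(G)$ (which exists because $\sigma(G)=2$ already requires $|\pi(G)|\ge 2$) we have $2\mid s-1$, violating $(1)$ for the pair $(2,s)$. Hence $2\notin\pi(G)$. This is precisely what makes alternative (iii) of Lemma~\ref{fact} vacuous for every relevant section of $G$, since that alternative requires the prime dividing the normal subgroup to be $2$; it also makes all the odd Hall subgroups we shall use conjugate, by Lemma~\ref{EpiCpi}. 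Second, since $F(G)$ is nilpotent it is the direct product of its Sylow subgroups, so if three distinct primes divided $|F(G)|$ we could multiply three pairwise commuting elements of prime orders and obtain an element of order divisible by three primes. Thus $|\pi(F(G))|\le 2$, and since $C_G(F(G))\le F(G)$ for solvable $G$, every prime in $\pi(G)\setminus\pi(F(G))$ is represented by elements acting nontrivially on $F(G)$; as $|\pi(G)|\ge 4$, at least two such primes act nontrivially.

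Next I translate $\sigma(G)=2$ into fixed-point conditions. Fix $p\in\pi(F(G))$ and set $P=O_p(G)\neq 1$. For distinct primes $q,r\in\pi(G)\setminus\{p\}$, any element $c$ of order $qr$ (which exists by condition $(2)$) must satisfy $C_P(c)=1$, for otherwise a nontrivial $p$-element of $C_P(c)$ would commute with $c$ and produce an element of order $pqr$. Likewise, passing to a Hall subgroup for the primes acting on $F(G)$ and re-applying the bound $|\pi(F(\,\cdot\,))|\le 2$ inside it, the absence of elements whose order is a product of three primes says exactly that, for every triple of distinct primes, the product of two of them acts fixed-point-freely on the Sylow subgroup of the relevant nilpotent normal subgroup attached to the third. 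Passing finally to a $G$-chief factor $V\le F(G)$, elementary abelian of characteristic $p$, I obtain a faithful, completely reducible action of a Hall $(\pi(G)\setminus\{p\})$-subgroup on $F(G)/\Phi(F(G))$, on which these fixed-point-free statements persist.

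The heart of the proof is to build from these data a Frobenius subgroup $F_0\rtimes\langle c\rangle$ with cyclic complement $\langle c\rangle$ of \emph{composite} order $qr$ and nontrivial kernel $F_0$ of order coprime to $p$, acting nontrivially on $V$; once this is available, Lemma~\ref{LemmaMazurov} at once yields an element of order $p\cdot qr$ in $V\rtimes\langle c\rangle\le G$, divisible by the three distinct primes $p,q,r$, which is the contradiction sought. To produce the complement I take an element $c$ of order $qr$ and analyse the fixed points of its prime-power parts $c^{q}$ and $c^{r}$. The element $c$ is fixed-point-free on the relevant coprime section, but $c^{q}$ and $c^{r}$ need not be; decomposing that section into the $C(\,\cdot\,)$ and $[\,\cdot\,]$ components under these parts, and using that any common fixed point lies in $C(c)=1$, shows that on each piece one prime-power part acts fixed-point-freely while the other is absorbed into a Frobenius kernel. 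Condition $(1)$ re-enters here through Lemma~\ref{fact}: as $G$ is odd, alternative (iii) is excluded, so whenever a prime-power part acts with nontrivial commutator its minimal polynomial on a chief factor equals $x^{s}-1$ and it fixes a nonzero vector, which pins down the eigenvalue structure and forces the fixed-point-free behaviour of the whole complement $\langle c\rangle$ required for the Frobenius property.

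I expect the main obstacle to be exactly this construction. The prime-power parts $c^{q},c^{r}$ of a composite-order element may have nonzero fixed spaces even when $c$ does not, so a single composite-order element need not generate a Frobenius complement, and one must split into cases according to which prime-power part is fixed-point-free on which chief factor. Organising these cases while tracking the two shapes of $F(G)$ — namely $|\pi(F(G))|=1$ with three acting primes, and $|\pi(F(G))|=2$ with two acting primes — and verifying in each case that the resulting kernel $F_0$ genuinely acts nontrivially on a characteristic-$p$ chief factor, is the technical core; condition $(1)$ and Lemma~\ref{fact} are the tools that eliminate the degenerate configurations in which no composite-order Frobenius complement would otherwise arise, so that Lemma~\ref{LemmaMazurov} can always be brought to bear.
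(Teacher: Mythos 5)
Your proposal does not address the statement it is supposed to prove. Lemma~\ref{EpiCpi} asserts that for any set of primes $\pi$ with $2\notin\pi$ the classes $E_\pi$ and $C_\pi$ coincide, i.e.\ in \emph{every} finite group (solvable or not) that possesses a Hall $\pi$-subgroup, all Hall $\pi$-subgroups are conjugate. What you have written is instead an attack on Proposition~\ref{rzr2}: your hypotheses ($\sigma(G)=2$, conditions $(1)$--$(2)$, contradiction from $|\pi(G)|\ge 4$), your tools (Lemmas~\ref{LemmaMazurov} and~\ref{fact}, Frobenius kernels acting on chief factors of a solvable group), and your target contradiction (an element of order $pqr$) all belong to that proposition and have no bearing on the Hall-conjugacy statement; nothing in your text engages with Hall $\pi$-subgroups of an arbitrary finite group at all. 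Worse, your first paragraph explicitly invokes Lemma~\ref{EpiCpi} itself (``it also makes all the odd Hall subgroups we shall use conjugate, by Lemma~\ref{EpiCpi}''), so read as a proof of that lemma the argument is circular on its face.

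For the record, the paper offers no proof of Lemma~\ref{EpiCpi}: it is Gross's theorem, quoted directly from \cite{Gross1} and \cite{Gross2}. The known proof is deep --- it reduces the question to composition factors and verifies the relevant property case by case using the classification of finite simple groups; the Feit--Thompson theorem makes the Hall $\pi$-subgroups themselves solvable when $2\notin\pi$, but conjugacy inside an arbitrary overgroup does not follow from any elementary fixed-point or Frobenius-action argument of the kind you sketch, so no repair of your outline along its present lines can succeed. (Separately, even judged as a proof of Proposition~\ref{rzr2}, your ``heart of the proof'' --- manufacturing a Frobenius subgroup with cyclic complement of composite order $qr$ --- is acknowledged but not carried out, whereas the paper's actual proof of that proposition runs through a minimal counterexample, minimal normal subgroups, the socle and Fitting subgroup of a residual Hall subgroup, and Lemmas~\ref{fr}, \ref{fact}, \ref{keller2} together with Thompson's theorem.)
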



\begin{lem}\label{keller2}
Let $H$ be a finite solvable group such that $\sigma(H)=1$. Then $|\pi(H)| \le 2$.
\end{lem}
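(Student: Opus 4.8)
The plan is to read the hypothesis through the Gruenberg--Kegel graph and then appeal to the structure theory of groups with disconnected prime graph. The condition $\sigma(H)=1$ says exactly that every element of $H$ has prime power order, i.e. $H$ has no element of order $pq$ for distinct primes $p,q$. Equivalently, $GK(H)$ has no edges whatsoever, so it is totally disconnected and its number of connected components is precisely $|\pi(H)|$. If $|\pi(H)|\le 1$ there is nothing to prove, so assume $|\pi(H)|\ge 2$; then $GK(H)$ is disconnected, and it suffices to show that a solvable group with disconnected prime graph has at most two prime graph components, since the bound $|\pi(H)|\le 2$ is then immediate. (If one prefers to argue by contradiction, one may first use Hall's theorem to pass to a Hall $\{p,q,r\}$-subgroup for three chosen primes $p,q,r\in\pi(H)$; this subgroup is again solvable with $\sigma=1$ and totally disconnected $GK$, reducing the task to ruling out exactly three components.)

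Next I would invoke the Gruenberg--Kegel theorem: a finite group with disconnected prime graph is a Frobenius group, a $2$-Frobenius group, or a group possessing a nonabelian simple section with disconnected prime graph. Since $H$ is solvable, the third possibility cannot occur, so $H$ is either a Frobenius group or a $2$-Frobenius group. The whole matter then comes down to checking that in each of these two cases $GK(H)$ has \emph{exactly} two connected components. In the Frobenius case $H=F\rtimes C$, the kernel $F$ is nilpotent by Lemma~\ref{fr}(1), and a nilpotent group all of whose elements have prime power order is a $p$-group, so $\pi(F)$ reduces to a single vertex; a standard counting argument shows that every element of $H$ lies in $F$ or in a conjugate of $C$, whence there is no edge joining $\pi(F)$ to $\pi(C)$ and the induced subgraph on $\pi(C)$ forms the remaining component, which is connected by the restrictive structure of Frobenius complements recorded in Lemma~\ref{fr}. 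The $2$-Frobenius case $H=ABC$ is handled in the same spirit: the kernels are nilpotent, and the two components turn out to be $\pi(B)$ and $\pi(A)\cup\pi(C)$.

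Combining these observations, a solvable group with disconnected prime graph has exactly two connected components; applied to our totally disconnected $GK(H)$, whose component count equals $|\pi(H)|$, this forces $|\pi(H)|=2$, completing the proof. The main obstacle is precisely the connectedness half of the ``exactly two components'' statement, namely ruling out that the complement side (respectively the set $\pi(A)\cup\pi(C)$) splits further; this is where the special arithmetic of Frobenius complements from Lemma~\ref{fr} is indispensable, and if desired it can be quoted directly from the classical description of the prime graph components of Frobenius and $2$-Frobenius groups rather than reproved here.
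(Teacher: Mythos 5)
Your argument is correct in substance, but it takes a genuinely different route from the paper. The paper's proof is a one-line citation of Higman's theorem \cite{Higman}, whose Theorem~1 states precisely that a finite solvable group in which every element has prime power order (i.e.\ $\sigma(H)=1$) has order divisible by at most two primes; nothing further is needed. You instead observe that $\sigma(H)=1$ makes $GK(H)$ edgeless, so its number of components equals $|\pi(H)|$, and then invoke the Gruenberg--Kegel theorem: a solvable group with disconnected prime graph is a Frobenius or $2$-Frobenius group and has exactly two components. This works, and it has the virtue of reducing the claim to concrete structural facts (nilpotence of Frobenius kernels, Lemma~\ref{fr}), but it imports heavier machinery than the statement requires, and the one point you flag as the ``main obstacle''--- connectedness of the prime graph of the complement --- is genuinely the part that needs either a citation to the classical component description or a short ad hoc argument (in the edgeless setting it can be shortcut: if $|C|$ is even its unique involution is central in $C$, forcing $\pi(C)=\{2\}$; if $|C|$ is odd one uses that all Sylow subgroups of $C$ are cyclic). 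Historically the implication runs the other way: Higman's elementary 1957 result is one of the ingredients underlying the Gruenberg--Kegel analysis of solvable groups with disconnected prime graph, so the paper's citation is both shorter and logically prior.
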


\begin{proof} Follows directly from \cite[Theorem~1]{Higman}.
\end{proof}

\begin{lem}\label{keller}
Let $H$ be a finite solvable group such that $\sigma(H)=2$. Then $|\pi(H)| \le 5$.
\end{lem}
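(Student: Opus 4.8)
The plan is to induct on $|H|$: pass to $H/N$ for a minimal normal subgroup $N$, and show that if $\pi(H)$ were too large one could always manufacture an element whose order is divisible by three distinct primes, contradicting $\sigma(H)=2$.

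Two preliminary observations set up the machinery. If $K\le H$ then $\omega(K)\subseteq\omega(H)$, and if $N\trianglelefteq H$ then $o(gN)$ divides $o(g)$; hence $\sigma$ does not increase on passage to subgroups or quotients. Next, $GK(H)$ has no three pairwise non-adjacent vertices: if $p,q,r$ were pairwise non-adjacent, then a Hall $\{p,q,r\}$-subgroup (which exists since $H$ is solvable) would contain no element whose order is a product of two distinct primes, so it would satisfy $\sigma=1$ while having three prime divisors, contradicting Lemma~\ref{keller2}. In particular, since $R(3,3)=6$, once $|\pi(H)|\ge 6$ the graph $GK(H)$ must contain a triangle; the task is to promote such a configuration to a genuine element of order a product of three primes.

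For the inductive step let $N$ be a minimal normal subgroup; it is an elementary abelian $p$-group, which I view as an $\mathbb{F}_p$-space $V$, and by induction $|\pi(H/N)|\le 5$. If $p\in\pi(H/N)$ then $\pi(H)=\pi(H/N)$ and we are done, so assume $p\notin\pi(H/N)$ and, for contradiction, that $\pi(H/N)=\{q_1,\dots,q_5\}$. The engine is the following dictionary: since $p\notin\pi(H/N)$, the $p$-part of any element of order $pq_i$ lies in $N$, so $p$ is adjacent to $q_i$ in $GK(H)$ if and only if some $q_i$-element $a$ has $C_V(a)\neq 0$. Consequently, if some $p'$-element $g$ with image of order $q_iq_j$ in $H/N$ has $C_V(g)\neq 0$, then for $0\neq v\in C_V(g)$ the commuting product $vg$ has order $pq_iq_j$, a contradiction. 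Thus it suffices to exclude the configuration in which no element whose order is a product of two distinct primes fixes a nonzero vector of $V$.

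Here lies the main obstacle, and where I would deploy Lemmas~\ref{fr}, \ref{LemmaMazurov}, and~\ref{fact}. Write $S$ for the set of neighbours of $p$ in $GK(H)$ and $Q=\pi(H/N)\setminus S$. By the dictionary, every prime-order element whose prime lies in $Q$ acts without fixed points, and since a fixed vector of any element is fixed by a prime-order power of it, the Hall $Q$-subgroup $R$ of $H$ acts freely on $V$; thus $V\rtimes R$ is a Frobenius group with complement $R$. Lemma~\ref{fr} then forces the Sylow subgroups of $R$ to be cyclic or quaternion, so $R$ is metacyclic up to its quaternion $2$-part, and since $\sigma(H)=2$ forces every cyclic subgroup of $R$ to have order divisible by at most two primes, we get $|\pi(R)|=|Q|\le 4$. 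The argument now splits according to $|Q|$: the bound just obtained disposes of the case $|Q|=5$, while for the smaller values of $|Q|$ one has several primes in $S$ and must use Lemmas~\ref{LemmaMazurov} and~\ref{fact} on the minimal polynomials and fixed spaces of their elements on $V$ — in particular the guarantee $\dim C_V(\langle c\rangle)>0$ in Lemma~\ref{LemmaMazurov} — to produce a common nonzero fixed vector for some two-prime element after all. Reconciling these two regimes so that $|S|+|Q|=5$ becomes impossible is the delicate heart of the proof, and the place where the exact value $5$ (rather than something larger) is pinned down. The bound is sharp: a solvable group whose Gruenberg--Kegel graph is the $5$-cycle $C_5$ is triangle-free, hence satisfies $\sigma=2$, while $|\pi|=5$.
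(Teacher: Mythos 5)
There is a genuine gap, and you name it yourself. The set-up is sound: the reduction to a minimal normal elementary abelian $p$-subgroup $V$, the observation that $GK(H)$ has no coclique of size $3$ (via a Hall subgroup and Lemma~\ref{keller2}), the ``dictionary'' translating adjacency of $p$ to fixed vectors in $V$, and the Frobenius argument via Lemma~\ref{fr} bounding $|Q|$ are all correct. But these steps only dispose of the single configuration $|Q|=5$; everything with $|Q|\le 4$ --- in particular $Q=\varnothing$, where there is no Frobenius complement at all and Lemma~\ref{fr} gives nothing --- is left as ``the delicate heart of the proof,'' with only a gesture toward Lemmas~\ref{LemmaMazurov} and~\ref{fact}. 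Those two lemmas apply to quite specific configurations (a Frobenius group $F\rtimes\langle c\rangle$ acting on $V$, or a normal $p$-subgroup extended by a cyclic group of prime order acting on $V$), and it is not at all evident how they force some element of order $q_iq_j$ to fix a nonzero vector when five primes sit above $V$; that is precisely where the value $5$ is earned, and it is not earned here. As written, the argument proves only $|\pi(H)|\le |S|+|Q|+1$ with $|Q|\le 4$, i.e.\ nothing beyond the trivial direction unless $S=\varnothing$.

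For comparison: the paper does not prove this lemma internally at all --- it is a direct citation of Zhang's theorem (\cite[Theorem~1]{Zhang}), whose proof contains exactly the case analysis you are missing. So your proposal is not an alternative proof but an unfinished reconstruction of Zhang's argument; to make it a proof you would either have to carry out the fixed-point analysis for each value of $|Q|$ in detail or simply cite Zhang as the authors do. (Your sharpness remark via a solvable group with prime graph $C_5$ is fine in spirit, but the existence of such a group is itself a nontrivial fact that would need a reference.)
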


\begin{proof} Follows directly from \cite[Theorem~1]{Zhang}.
\end{proof}

\begin{lem}[{\rm See \cite{Atlas}}]\label{spectra}
$(1)$  $|J_4|= 2^{21} \cdot 3^3 \cdot 5 \cdot 7 \cdot 11^3 \cdot 23 \cdot 29 \cdot 31 \cdot 37 \cdot 43 ${\rm;}

$(2)$ $\omega(J_4)$ consists from all the divisors of numbers from the set $\{16, 23, 24, 28, 29, 30, 31, 35, 37, 40, 42, 43, 44, 66\}${\rm;}

$(3)$ $\omega(J_4\times J_4)=\{x \mid x$  divides $lcm\,(a,b)$, where $a,b\in \omega(J_4)\}$.

\end{lem}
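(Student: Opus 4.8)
\noindent\emph{Proof proposal.} The plan is to obtain (1) and (2) by direct appeal to the conjugacy-class data of $J_4$ recorded in \cite{Atlas}, and to derive (3) from an elementary general fact about the spectra of direct products. For (1), one simply reads the order of $J_4$ from \cite{Atlas} and factorises it into primes, which yields the stated expression $2^{21}\cdot 3^3\cdot 5\cdot 7\cdot 11^3\cdot 23\cdot 29\cdot 31\cdot 37\cdot 43$.

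For (2), I would first recall that for any finite group $G$ the set $\omega(G)$ is closed under divisors: if $g\in G$ has order $n$ and $d\mid n$, then $g^{n/d}$ has order $d$. Hence $\omega(J_4)$ is completely determined by its maximal elements with respect to divisibility. I would then list the orders of the conjugacy classes of $J_4$ from \cite{Atlas}, extract those that are maximal under divisibility, and check that this maximal set is exactly $\{16,23,24,28,29,30,31,35,37,40,42,43,44,66\}$. It follows that $\omega(J_4)$ is precisely the set of all divisors of the numbers in this set, as claimed. This step is a finite verification against the class list, with no genuine mathematical content.

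For (3), I would use two observations. First, an element $(g,h)$ of $J_4\times J_4$ has order $\operatorname{lcm}(|g|,|h|)$, so that $\omega(J_4\times J_4)=\{\operatorname{lcm}(a,b)\mid a,b\in\omega(J_4)\}$. Second, this set coincides with the set of all divisors of such least common multiples: the inclusion of the former in the latter is immediate, while for the reverse, given $x$ dividing $\operatorname{lcm}(a,b)$ with $a,b\in\omega(J_4)$, one distributes each prime-power factor $p^{v_p(x)}$ of $x$ to whichever of $a$ or $b$ has $p$-part at least $p^{v_p(x)}$. This produces divisors $a'\mid a$ and $b'\mid b$ with $\operatorname{lcm}(a',b')=x$, and since $\omega(J_4)$ is divisor-closed we have $a',b'\in\omega(J_4)$, whence $x\in\omega(J_4\times J_4)$. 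Combining the two observations gives the description of $\omega(J_4\times J_4)$ stated in (3).

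Since all three parts reduce either to transcription of the Atlas data or to routine elementary facts, there is no substantial mathematical obstacle here; the only point requiring care is the correct and complete transcription of the element orders of $J_4$ in (2), so that the maximal-under-divisibility set is captured in full and none of the fourteen listed numbers is omitted or spurious.
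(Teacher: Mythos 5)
Your proposal is correct and takes essentially the same route as the paper, which offers no proof at all for this lemma and simply cites the Atlas data; your write-up just makes the routine transcription and the elementary direct-product computation explicit. One small simplification for (3): the reverse inclusion is immediate because $\operatorname{lcm}(a,b)=|(g,h)|\in\omega(J_4\times J_4)$ and spectra are divisor-closed, so your prime-power distribution argument producing $a'\mid a$, $b'\mid b$ with $\operatorname{lcm}(a',b')=x$, while correct, is unnecessary.
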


\begin{lem}\label{FactrosWithProp} Let $H$ be a finite simple group. Assume that the following conditions hold{\rm:}

$(i)$ $\pi(H) \subseteq \pi(J_4)${\rm;}

$(ii)$ $\omega(H) \cap \{9, 25\} = \varnothing${\rm;}

$(iii)$ $|\pi(H) \cap \{11, 23, 29, 31, 37, 43\}| \ge 2$.

Then one of the following statements holds{\rm:}

$(1)$ $H \cong L_2(23)$ and $\pi(H) \cap \{11, 23, 29, 31, 37, 43\} = \{11,23\}${\rm;}

$(2)$ $H \cong M_{23}$ and $\pi(H) \cap \{11, 23, 29, 31, 37, 43\} = \{11,23\}${\rm;}

$(3)$ $H \cong M_{24}$ and $\pi(H) \cap \{11, 23, 29, 31, 37, 43\} = \{11,23\}${\rm;}

$(4)$ $H \cong L_{2}(32)$ and $\pi(H) \cap \{11, 23, 29, 31, 37, 43\} = \{11,31\}${\rm;}

$(5)$ $H \cong U_{3}(11)$ and $\pi(H) \cap \{11, 23, 29, 31, 37, 43\} = \{11,37\}${\rm;}

$(6)$ $H \cong L_{2}(43)$ and $\pi(H) \cap \{11, 23, 29, 31, 37, 43\} = \{11,43\}${\rm;}

$(7)$ $H \cong J_4$ and $\{11, 23, 29, 31, 37, 43\} \subset \pi(H)$.
\end{lem}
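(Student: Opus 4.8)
The plan is to invoke the classification of finite simple groups (CFSG) and examine each family in turn, discarding every candidate that violates one of the three hypotheses. Condition $(iii)$ forces $|\pi(H)|\ge 2$, so $H$ is nonabelian simple and is therefore an alternating group, a sporadic group, or a group of Lie type. I would dispose of the alternating groups first. If $H\cong A_n$ with $n\ge 5$, then $\pi(H)$ consists of all primes at most $n$; since $13\notin\pi(J_4)$, condition $(i)$ forces $n\le 12$. But then the only member of $\{11,23,29,31,37,43\}$ that can divide $|H|$ is $11$, contradicting $(iii)$. Hence no alternating group survives.

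Next I would treat the $26$ sporadic groups by direct inspection of the ATLAS. Condition $(i)$ eliminates at once every sporadic group whose order is divisible by a prime outside $\pi(J_4)$, in particular by $13$, $17$, or $19$; what remains after imposing $(iii)$ as well is the short list $M_{23}$, $M_{24}$, $Co_2$, $Co_3$, $J_4$, each of which contains the two large primes $11$ and $23$. Condition $(ii)$ then removes $Co_2$ and $Co_3$, both of which contain elements of order $9$, while $M_{23}$, $M_{24}$, $J_4$ have Sylow $3$- and $5$-subgroups of exponent $3$ and $5$. This yields conclusions $(2)$, $(3)$, and $(7)$.

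The bulk of the argument is the groups of Lie type. Write $H$ for such a group over $\mathbb{F}_q$ with $q=p^f$. The driving idea is that every large prime $r\mid|H|$ with $r\ne p$ is a primitive prime divisor of $q^e-1$ for $e=\mathrm{ord}_r(q)$, so $r\equiv 1\pmod e$; conversely, by Zsygmondy's theorem almost every cyclotomic factor $\Phi_e(q)$ occurring in $|H|$ contributes such a primitive prime divisor. Since $(i)$ forbids $13,17,19,41$ and all primes exceeding $43$, a group of large Lie rank — which carries $\Phi_e(q)$ for many values of $e$ — is forced to acquire a forbidden prime, so the rank must be small and $q$ is confined to a short list. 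I would thus reduce to the small-rank types $A_1(q)=L_2(q)$, $A_2(q)=L_3(q)$, ${}^2A_2(q)=U_3(q)$, ${}^2B_2(q)=Sz(q)$, together with $B_2(q)$, $G_2(q)$, ${}^2G_2(q)$, ${}^3D_4(q)$, and for each enumerate the finitely many admissible $q$. Here $(ii)$ is used decisively: it rules out any $L_2(q)$ in which $9$ or $25$ divides $q^2-1$ (which would force a semisimple element of that order), and more generally any candidate whose Sylow $3$- or $5$-subgroup forces an element of order $9$ or $25$. Running through this enumeration leaves precisely $L_2(23)$, $L_2(32)$, $L_2(43)$, and $U_3(11)$, which are conclusions $(1)$, $(4)$, $(6)$, and $(5)$; for each I would read off from the order formula that $(i)$, $(ii)$, $(iii)$ hold and record the intersection with $\{11,23,29,31,37,43\}$.

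The main obstacle is guaranteeing completeness of the Lie-type enumeration. The two delicate points are: making the rank bound rigorous, which requires knowing exactly which factors $\Phi_e(q)$ divide $|H|$ for each type and controlling the finitely many Zsygmondy exceptions; and verifying the order-$9$ and order-$25$ conditions in the borderline survivors, for instance confirming that the Sylow $3$-subgroup of $U_3(11)$ has exponent $3$, so that $9\notin\omega(U_3(11))$, and that its Sylow $5$-subgroup has exponent $5$. Both are finite but genuinely case-by-case verifications, and it is precisely here that one must be careful not to overlook a family or a small-field exception.
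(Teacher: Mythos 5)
Your overall strategy---run through the CFSG and eliminate candidates using $(i)$--$(iii)$---is the same in spirit as the paper's, and your treatment of the alternating and sporadic groups is correct. But the paper does not redo the Lie-type enumeration from scratch: it quotes Zavarnitsine's classification of finite simple groups with narrow prime spectrum \cite{Zav}, which from $(i)$ and $(iii)$ alone yields the candidate list $L_2(23)$, $M_{23}$, $M_{24}$, $Co_3$, $Co_2$, $L_2(32)$, $U_3(11)$, $L_2(43)$, $U_7(2)$, $L_2(43^2)$, $S_4(43)$, $J_4$, and then removes $Co_2$, $Co_3$, $U_7(2)$ (elements of order $9$) and $L_2(43^2)$, $S_4(43)$ (elements of order $25$) using $(ii)$ together with the ATLAS and Buturlakin's spectra computations \cite{ButurlakinLin}, \cite{ButurlakinSympl}.

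The genuine gap in your proposal is the claimed reduction of the Lie-type case to the small-rank families $A_1$, $A_2$, ${}^2A_2$, ${}^2B_2$, $B_2$, $G_2$, ${}^2G_2$, ${}^3D_4$. That reduction is false: $U_7(2)={}^2A_6(2)$ has order $2^{21}\cdot 3^8\cdot 5\cdot 7\cdot 11\cdot 43$, so it satisfies $(i)$, and it meets $\{11,23,29,31,37,43\}$ in $\{11,43\}$, so it satisfies $(iii)$. Your primitive-prime-divisor heuristic does not force a forbidden prime here, because for $q=2$ the relevant cyclotomic values ($\Phi_2(2)=3$, $\Phi_3(2)=7$, $\Phi_4(2)=5$, $\Phi_{10}(2)=11$, $\Phi_{14}(2)=43$, \dots) all lie in $\pi(J_4)$; large untwisted rank is perfectly compatible with $(i)$ in characteristic $2$. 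This group is eliminated only by $(ii)$, since $9\in\omega(U_7(2))$, and your enumeration never reaches that check because the group is discarded at the rank-bound stage. (By contrast, $L_2(43^2)$ and $S_4(43)$ do fall inside your listed families and would be caught by your order-$25$ test, so $U_7(2)$ is the one candidate your argument silently loses; the final list you state happens to be correct, but the argument as written does not establish it.) To repair the proof you must either make the rank bound honest---which requires a genuine analysis of all classical types at $q=2$ and other small $q$---or, as the paper does, invoke an existing classification such as \cite{Zav}.
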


\begin{proof} In view of \cite{Zav}, if $\pi(H) \subseteq \pi(J_4)$ and $|\pi(H) \cap \{11, 23, 29, 31, 37, 43\}| \ge 2$, then $H$ is one of the following groups{\rm:} $L_2(23)$, $M_{23}$, $M_{24}$, $Co_3$, $Co_2$, $L_2(32)$, $U_3(11)$, $L_2(43)$, $U_7(2)$, $L_2(43^2)$, $S_4(43)$, $J_4$.

If $H \in \{Co_3,Co_2\}$, then $9 \in \omega(H)$ in view of \cite{Atlas}.

If $H \cong U_7(2)$, then $9 \in \omega(H)$ in view of \cite[Corollary~3]{ButurlakinLin}.

If $H \cong L_2(43^2)$, then $25 \in \omega(H)$ in view of \cite[Corollary~3]{ButurlakinLin}.

If $H \cong S_4(43)$, then $25 \in \omega(H)$ in view of \cite[Corollary~2]{ButurlakinSympl}.
\end{proof}

\begin{lem}\label{FactrosWithProp2} Let $H$ be a finite simple group. Assume that the following conditions hold{\rm:}

$(i)$ $\pi(H) \subseteq \pi(J_4)${\rm;}

$(ii)$ $\omega(H) \cap \{9, 25\} = \varnothing${\rm;}

$(iii)$ $|\pi(H) \cap \{5, 23, 29, 37, 43\}| \ge 2$.

Then one of the following statements holds{\rm:}

$(1)$ $H \cong M_{23}$ and $\pi(H) \cap \{5, 23, 29, 37, 43\} = \{5,23\}${\rm;}

$(2)$ $H \cong M_{24}$ and $\pi(H) \cap \{5, 23, 29, 37, 43\} = \{5,23\}${\rm;}

$(3)$ $H \cong L_2 (29)$ and $\pi(H) \cap \{5, 23, 29, 37, 43\} = \{5,29\}${\rm;}

$(4)$ $H \cong U_{3}(11)$ and $\pi(H) \cap \{5, 23, 29, 37, 43\} = \{5,37\}${\rm;}

$(5)$ $H \cong J_4$ and $\{5, 23, 29, 37, 43\} \subset \pi(H)$.
\end{lem}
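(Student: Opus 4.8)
The plan is to follow the proof of Lemma~\ref{FactrosWithProp} almost verbatim, only with the prime set $\{11,23,29,31,37,43\}$ replaced by $\{5,23,29,37,43\}$. First I would invoke \cite{Zav}: since $\pi(H)\subseteq\pi(J_4)$ and $H$ contains at least two primes from $\{5,23,29,37,43\}$, the group $H$ lies in an explicit finite list of simple groups. Besides the Mathieu groups $M_{23}$, $M_{24}$, the unitary group $U_3(11)$, and $J_4$ itself, this list again contains $Co_2$, $Co_3$, $U_7(2)$, $L_2(43^2)$, and $S_4(43)$: each of these already appeared in Lemma~\ref{FactrosWithProp}, and each carries $5$ together with a further prime of the new set, so each meets $\{5,23,29,37,43\}$ in at least two primes. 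The genuinely new entry is $L_2(29)$: one has $\pi(L_2(29))\cap\{11,23,29,31,37,43\}=\{29\}$, a single prime, so $L_2(29)$ was invisible to the previous lemma, whereas $\pi(L_2(29))\cap\{5,23,29,37,43\}=\{5,29\}$. The groups $L_2(23)$, $L_2(32)$, and $L_2(43)$, by contrast, now drop out, since each meets $\{5,23,29,37,43\}$ in only one prime.

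Next I would discard from this list every group violating condition~$(ii)$, using exactly the references from Lemma~\ref{FactrosWithProp}. The groups $Co_2$ and $Co_3$ contain an element of order $9$ by \cite{Atlas}; the group $U_7(2)$ contains an element of order $9$ by \cite[Corollary~3]{ButurlakinLin}; while $L_2(43^2)$ contains an element of order $25$ by \cite[Corollary~3]{ButurlakinLin} and $S_4(43)$ contains an element of order $25$ by \cite[Corollary~2]{ButurlakinSympl}. The surviving candidates are then $M_{23}$, $M_{24}$, $L_2(29)$, $U_3(11)$, and $J_4$, and for each of them I would confirm $\omega(H)\cap\{9,25\}=\varnothing$: via \cite{Atlas} for the Mathieu groups, via the standard description of element orders in $L_2(29)$ and $U_3(11)$ (note $25\nmid|U_3(11)|$ and the orders in $L_2(29)$ divide one of $29,14,15$), and via Lemma~\ref{spectra}$(2)$ for $J_4$.

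Finally I would read off $\pi(H)\cap\{5,23,29,37,43\}$ for each survivor, obtaining $\{5,23\}$ for $M_{23}$ and $M_{24}$, $\{5,29\}$ for $L_2(29)$, $\{5,37\}$ for $U_3(11)$, and the whole set for $J_4$; these yield cases $(1)$--$(5)$ respectively. The only substantively new work relative to Lemma~\ref{FactrosWithProp} is the appearance and treatment of $L_2(29)$. The main point requiring care is to be certain that the list furnished by \cite{Zav} for the set $\{5,23,29,37,43\}$ is complete, i.e. that no further simple group $H$ with $\pi(H)\subseteq\pi(J_4)$ meeting $\{5,23,29,37,43\}$ in two primes has been overlooked; this, however, is the same finite bookkeeping already carried out in the previous lemma and presents no essential difficulty.
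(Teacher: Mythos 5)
Your overall strategy is exactly the paper's: pull the candidate list from \cite{Zav}, strike the groups with $9$ or $25$ in their spectra using \cite{Atlas}, \cite[Corollary~3]{ButurlakinLin} and \cite[Corollary~2]{ButurlakinSympl}, and read off the intersections for the survivors. The treatment of $L_2(29)$, and the observation that $L_2(23)$, $L_2(32)$, $L_2(43)$ drop out while $Co_2$, $Co_3$, $U_7(2)$, $L_2(43^2)$, $S_4(43)$ persist, all match the paper.

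However, there is a concrete gap: your enumeration of the list from \cite{Zav} is incomplete. The group $U_4(7)$ also qualifies: $|U_4(7)|=\frac{1}{4}\,7^6(7^2-1)(7^3+1)(7^4-1)$ gives $\pi(U_4(7))=\{2,3,5,7,43\}\subseteq\pi(J_4)$, and $\pi(U_4(7))\cap\{5,23,29,37,43\}=\{5,43\}$ has size $2$. This group was invisible to Lemma~\ref{FactrosWithProp} (it meets $\{11,23,29,31,37,43\}$ only in $\{43\}$), so it is a genuinely new entry alongside $L_2(29)$ --- contrary to your claim that $L_2(29)$ is the only new case. Since $U_4(7)$ is not among your five surviving candidates, your argument as written would wrongly allow it as a conclusion of the lemma. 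The paper closes this case by noting that $25\in\omega(U_4(7))$ (again by \cite[Corollary~3]{ButurlakinLin}, since $5^2$ divides $7^4-1$ and a maximal torus realizes an element of order $25$), so it is eliminated by condition~$(ii)$. You flagged completeness of the list as the point requiring care, and indeed that is precisely where your write-up falls short.
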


\begin{proof} In view of \cite{Zav}, if $\pi(H) \subseteq \pi(J_4)$ and $|\pi(H) \cap \{5, 23, 29, 37, 43\}| \ge 2$, then $H$ is one of the following groups{\rm:} $M_{23}$, $M_{24}$, $Co_3$, $Co_2$, $L_2(29)$, $U_3(11)$, $U_4(7)$, $U_7(2)$, $L_2(43^2)$, $S_4(43)$, $J_4$.

As in the proof of Lemma~\ref{FactrosWithProp} we exclude the following groups{\rm:} $Co_3$, $Co_2$, $U_7(2)$, $L_2(43^2)$, $S_4(43)$.
Moreover, $25 \in \omega(U_4(7))$ in view of \cite[Corollary~3]{ButurlakinLin}.

\end{proof}

\begin{lem}\label{SolvSubrg} Let $G$ be a group and $$1=G_n<G_{n-1}<\ldots<G_1<G_0=G$$ be a normal series in $G$. Let $\pi=\{p_1,\ldots p_m\}$ be a set of pairwise distinct primes such that $p_k \in \pi(G_{i_k}/G_{i_k+1})$ and $i_k \not = i_l$ if $k\not = l$. Then $G$ contains a solvable subgroup $H$ such that $\pi(H)=\pi$.
\end{lem}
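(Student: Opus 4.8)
The plan is to argue by induction on $|G|$, peeling off the topmost factor of the series that is used and thereby reducing the whole statement to a single ``extension step'' which I will isolate as the crux. For the base case $m=1$, the prime $p_1$ divides $|G_{i_1}/G_{i_1+1}|$, hence divides $|G|$, so Cauchy's theorem gives a subgroup of order $p_1$, which is solvable with prime set $\{p_1\}$. For the inductive step, reorder the primes so that $i_1=\min_k i_k$ and set $M:=G_{i_1}$ and $N:=G_{i_1+1}$; since the series is (sub)normal, $N\trianglelefteq M$. All the remaining indices satisfy $i_k\ge i_1+1$, so the primes $p_2,\dots,p_m$ are supplied by the factors of the induced series $1=G_n<\cdots<G_{i_1+1}$ of the proper subgroup $G_{i_1+1}$, and their indices are still pairwise distinct. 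By the induction hypothesis applied to $G_{i_1+1}$ there is a solvable subgroup $L\le N$ with $\pi(L)=\rho$, where $\rho:=\{p_2,\dots,p_m\}$. Writing $p:=p_1$, which divides $|M/N|$, the lemma follows once we adjoin $p$ to $L$ without introducing new primes and without destroying solvability.

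For the extension step I first reduce to the case $M/N\cong C_p$: by Cauchy there is a subgroup of order $p$ in $M/N$, and replacing $M$ by the preimage $M^{*}$ of such a subgroup gives $N\trianglelefteq M^{*}$, $M^{*}/N\cong C_p$, and $L\le N\le M^{*}$, so it suffices to find $H$ inside $M^{*}$. If $p\in\rho$ we take $H=L$, so assume $p\notin\rho$. Let $\Lambda$ be the (non-empty) set of solvable subgroups of $N$ with prime set $\rho$ that are of maximal order. Conjugation by $M^{*}$ preserves solvability, the prime set $\rho$, and the order, so $M^{*}$ acts on $\Lambda$, and hence $M^{*}/N\cong C_p$ acts on the set $\Sigma$ of $N$-orbits in $\Lambda$. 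Suppose this $C_p$-action fixes some orbit $O\in\Sigma$, and choose $L'\in O$. For $x\in M^{*}\setminus N$ we have $(L')^{x}\in O=(L')^{N}$, so $(L')^{x}=(L')^{n}$ for some $n\in N$, whence $xn^{-1}\in N_{M^{*}}(L')\setminus N$. Thus the image of $N_{M^{*}}(L')$ in $M^{*}/N\cong C_p$ is nontrivial, hence all of $C_p$; as this image is a $p$-group, a Sylow $p$-subgroup of $N_{M^{*}}(L')$ contains a $p$-element $y$ with nontrivial image. Then $y$ normalizes $L'$, so $H:=\langle y\rangle L'$ is an extension of the solvable group $L'$ by a cyclic $p$-group, hence solvable, and $\pi(H)=\rho\cup\{p\}=\pi$, as required.

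The main obstacle is exactly the hypothesis used in the previous paragraph: that the induced $C_p$-action on $\Sigma$ has a fixed point, i.e.\ that $M^{*}$ normalizes the $N$-class of some maximal solvable $\rho$-subgroup of $N$. (The examples $L=C_3\times 1\le C_3^{2}\trianglelefteq C_3\wr C_2$ and the Borel subgroups of $PSL_2(8)$ under a field automorphism of order $3$ show both that one cannot rigidly extend an arbitrary $L$, and that a crude congruence count is not enough, since $|\Sigma|$ may be divisible by $p$ while fixed classes persist.) When $p\nmid|N|$ the relevant action is coprime, and the existence of the normalizing $p$-element attached to an $M^{*}$-invariant $N$-orbit can also be read off from Glauberman's lemma; the genuinely delicate case is $p\mid|N|$, where coprimeness fails. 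I expect to treat it by passing, via a Frattini argument, to the normalizer in $M^{*}$ of a Sylow $p$-subgroup of $N$ so as to reduce to a coprime configuration, and then to apply the construction above. Checking that this reduction still leaves a solvable $\rho$-subgroup whose $N$-class is $M^{*}$-invariant is the technical heart of the argument, and is where I anticipate the bulk of the work.
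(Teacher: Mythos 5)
Your reduction to a single extension step is set up in the wrong direction, and the step you yourself isolate as ``the technical heart'' is exactly the part that is missing; I do not see how to complete it as stated. You peel off the top factor $G_{i_1}/G_{i_1+1}$, obtain $L\le N=G_{i_1+1}$ by induction, and then must adjoin a $p$-element of $M^{*}\setminus N$ on top of (a conjugate of) $L$. For this you need the $C_p$-action on the set $\Sigma$ of $N$-classes of maximal-order solvable $\rho$-subgroups to have a fixed point, and you give no argument for that: $|\Sigma|$ can be divisible by $p$, and your fallback (a Frattini reduction to $N_{M^{*}}(P)$ for $P\in Syl_p(N)$) does not obviously leave any solvable $\rho$-subgroup inside $N_N(P)$, which may simply fail to contain one. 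Note also that the auxiliary claim you actually need (``$N\trianglelefteq M^{*}$ of index $p$ and $N$ has a solvable subgroup with prime set $\rho$ imply $M^{*}$ has one with prime set $\rho\cup\{p\}$'') is strictly stronger than the lemma itself when $|\rho|\ge 2$, since the lemma assigns only one prime per factor of a normal series; so you cannot hope to feed the lemma's statement back into the gap. Your inductive set-up also discards the fact that the terms $G_j$ are normal in all of $G$, not merely in $N$, which is precisely the structure one should exploit.

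The paper avoids all of this by running the induction from the \emph{bottom} of the series. After reducing to $m=n$ with $p_i\in\pi(G_{i-1}/G_i)$, take a Sylow $p_n$-subgroup $T$ of the bottom term $G_{n-1}$, which is normal in $G$. The Frattini argument gives $G=N_G(T)G_{n-1}$, hence $N_{G_{i-1}}(T)/N_{G_i}(T)\cong G_{i-1}/G_i$ for $i\le n-1$, so $N_G(T)/T$ inherits a normal series carrying $p_1,\dots,p_{n-1}$ in pairwise distinct factors; by induction it contains a solvable subgroup $H_1$ with $\pi(H_1)=\{p_1,\dots,p_{n-1}\}$, and the full preimage of $H_1$ in $N_G(T)$ is an extension of the $p_n$-group $T$ by $H_1$, hence solvable with prime set $\pi$. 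The point is that each new prime enters at the bottom of the solvable subgroup being built, as a normal Sylow subgroup supplied automatically by Frattini, so no fixed-point or invariance argument is ever needed. I would reorganize your induction along these lines rather than trying to repair the top-down extension step.
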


\begin{proof} Without loss of generality we can assume that $m=n$ and $p_i \in G_{i-1}/G_i$.
Let $T$ be a Sylow $p_n$-subgroup of $G_{n-1}$. Using the Frattini argument we conclude that $G=N_G(T)G_{n-1}$. Now $N_G(T)/N_{G_{n-1}}(T)\cong G/G_{n-1}$ and in view of induction reasonings, $N_G(T)/T$ contains a solvable subgroup $H_1$ such that $\pi(H_1)=\{p_1, \ldots, p_{n-1}\}$. Thus, we conclude that $N_G(T)$ contains a solvable subgroup $H$ such that $\pi(H)=\pi$.

\end{proof}

\begin{lem}\label{SolvSubrgHall} Let $G$ be a group and $$1=G_n<G_{n-1}<\ldots<G_1<G_0=G$$ be a normal series in $G$. Let $\pi_1, \ldots \pi_n$ be sets of odd primes such that $\pi_k \subseteq \pi(G_{i_k}/G_{i_k+1})$  and $i_k \not = i_l$ if $k\not = l$. Assume that $G_{i_k}/G_{i_k+1} \in E_{\pi_k}$ for each $k$. Then $G$ contains a solvable subgroup $H$ such that $\pi(H)=\cup_{i=1}^n \pi_i$.
\end{lem}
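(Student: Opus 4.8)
The plan is to adapt the proof of Lemma~\ref{SolvSubrg} almost verbatim, replacing Sylow subgroups by Hall subgroups and arguing by induction on $n$. First I would perform the same initial reduction as in Lemma~\ref{SolvSubrg}: since the $\pi_k$ are attached to pairwise distinct factors, after relabelling we may assume that $\pi_i\subseteq\pi(G_{i-1}/G_i)$ and $G_{i-1}/G_i\in E_{\pi_i}$ for each $i$. In the base case $n=1$ we have $G=G_0$ and $G_1=1$, so $G\in E_{\pi_1}$ supplies a Hall $\pi_1$-subgroup $H$; because $\pi_1\subseteq\pi(G)$ while $|G:H|$ is a $\pi_1'$-number, every prime of $\pi_1$ divides $|H|$, whence $\pi(H)=\pi_1$. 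Since the primes in $\pi_1$ are odd, $H$ has odd order and is therefore solvable by the Feit--Thompson odd order theorem.

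For the inductive step put $N=G_{n-1}$, a normal subgroup of $G$ with $\pi_n\subseteq\pi(N)$ and $N\in E_{\pi_n}$, and let $T$ be a Hall $\pi_n$-subgroup of $N$, so that $\pi(T)=\pi_n$ and $T$ has odd order. The crucial point is the Frattini argument: it requires that all Hall $\pi_n$-subgroups of $N$ be conjugate in $N$. This is \emph{not} automatic for Hall subgroups, but it holds here because $\pi_n$ consists of odd primes, so $E_{\pi_n}=C_{\pi_n}$ by Lemma~\ref{EpiCpi}. Consequently $G=N_G(T)N$, and therefore $N_G(T)/N_N(T)\cong G/N$, where $N_N(T)=N_G(T)\cap N\trianglelefteq N_G(T)$ and $T\trianglelefteq N_G(T)$.

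Next I would transport the top part of the series into $N_G(T)/T$. Composing the surjection $N_G(T)/T\twoheadrightarrow N_G(T)/N_N(T)\cong G/N$ with the projection of $G/N$ and pulling back the series $1=G_{n-1}/N<\dots<G_0/N=G/N$, one obtains a normal series of $N_G(T)/T$ whose top $n-1$ factors are isomorphic to $G_{i-1}/G_i$ for $i=1,\dots,n-1$; in particular each of these factors lies in $E_{\pi_i}$ and contains $\pi_i$. The inductive hypothesis (the remaining bottom factor being irrelevant, exactly as in the reduction at the start of the proof of Lemma~\ref{SolvSubrg}) then produces a solvable subgroup $H_1\le N_G(T)/T$ with $\pi(H_1)=\bigcup_{i=1}^{n-1}\pi_i$. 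Taking the full preimage $H$ of $H_1$ in $N_G(T)$ gives $T\trianglelefteq H$ with $H/T\cong H_1$, so $\pi(H)=\pi(T)\cup\pi(H_1)=\bigcup_{i=1}^{n}\pi_i$; as this union consists of odd primes, $H$ has odd order and hence is solvable, which completes the induction.

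The main obstacle, and the only place where the hypotheses genuinely do more than in Lemma~\ref{SolvSubrg}, is legitimising the Frattini argument: conjugacy of Hall $\pi$-subgroups can fail when $2\in\pi$, and it is precisely the restriction to odd primes, through $E_\pi=C_\pi$ (Lemma~\ref{EpiCpi}) together with the solvability of odd-order groups, that recovers the Sylow-theoretic step. Everything else is bookkeeping identical to the proof of Lemma~\ref{SolvSubrg}.
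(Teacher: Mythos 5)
Your base case, the choice of a Hall $\pi_n$-subgroup $T$ of $G_{n-1}$, and the use of Lemma~\ref{EpiCpi} to justify the Frattini argument all match the paper. The gap is in the inductive step, at the words ``the remaining bottom factor being irrelevant, exactly as in the reduction at the start of the proof of Lemma~\ref{SolvSubrg}.'' The series you build for $N_G(T)/T$ has an extra bottom factor $N_{G_{n-1}}(T)/T$ carrying no $\pi_k$, and your induction, as set up, does not cover series with uncovered factors. In Lemma~\ref{SolvSubrg} this really is harmless: one merges the uncovered bottom factor into the one above it, the condition ``$p_{n-1}$ divides the order of the factor'' survives the merge, and Sylow subgroups exist unconditionally. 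Here the analogous merge destroys the hypothesis: the merged factor is an extension of $N_{G_{n-1}}(T)/T$ by $G_{n-2}/G_{n-1}$, and $E_{\pi_{n-1}}$ is not closed under extensions, so the merged factor need not lie in $E_{\pi_{n-1}}$. If you instead keep the factor and try to run your own argument one level down, the Hall $\pi_{n-1}$-subgroup you need lives only in the quotient $\cong G_{n-2}/G_{n-1}$, and its preimage drags in all of $N_{G_{n-1}}(T)/T$ --- which can be non-solvable and can contribute primes outside $\bigcup\pi_i$ (take $G_{n-1}=A\times B$ with $A$ realising $\pi_n$ and $B$ a non-solvable $\pi_n'$-group; then $N_{G_{n-1}}(T)\supseteq B$). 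So the bottom factor is not irrelevant; it is the whole difficulty.

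This is exactly the part the paper's proof spends its effort on. It applies the induction to $N_G(T)/N_{G_{n-1}}(T)\cong G/G_{n-1}$, where every factor is covered, obtaining $H_1$, and then lifts $H_1$ through the kernel $A=N_{G_{n-1}}(T)/T$ by a case analysis: if $A$ is solvable, the full preimage of $H_1$ is solvable and Hall's theorem for solvable groups extracts a Hall $(\bigcup\pi_i)$-subgroup, discarding the extraneous primes of $A$; if $A$ is non-solvable, a second Frattini argument on a Sylow $2$-subgroup $S$ of $A$ replaces $A$ by the solvable group $N_A(S)$, after which the same extraction works, and only then is the preimage in $N_G(T)$ taken to pick up $T$. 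Your proof needs some version of this second stage; the first Frattini argument alone does not close the induction.
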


\begin{proof} Without loss of generality we can assume that $\pi_i \subseteq G_{i-1}/G_i$.

Let $T$ be a Hall $\pi_n$-subgroup of $G_{n-1}$. In view of the Feit-Thompson theorem \cite{FeitThompson}, $T$ is solvable. In view of Lemma~\ref{EpiCpi}, we have $G_{n-1} \in C_{\pi_n}$. Thus, using the Frattini argument we conclude that $G=N_G(T)G_{n-1}$. Now $N_G(T)/N_{G_{n-1}}(T)\cong G/G_{n-1}$. In view of induction reasonings, the group $N_G(T)/N_{G_{n-1}}(T)$ contains a solvable subgroup $H_1$ such that $\pi(H_1)=\cup_{i=1}^{n-1} \pi_i$.

If $N_{G_{n-1}}(T)/T$ is solvable, then we consider the complete preimage $H_2$ of $H_1$ in $N_G(T)$. Note that $H_2$ is solvable. In view of the Hall theorem \cite[Theorem 6.4.1]{Gorenstein}, $H_2$ contains a Hall $(\cup_{i=1}^n \pi_i)$-subgroup~$H$. Note that in this case $\pi(H)=\cup_{i=1}^n \pi_i$.

Thus, we can assume that $N_{G_{n-1}}(T)/T$ is non-solvable. In view of the Feit-Thompson theorem \cite{FeitThompson}, $|N_{G_{n-1}}(T)/T|$ is even. Put $R=N_G(T)/T$ and $A=N_{G_{n-1}}(T)/T$.  Let $S$ a Sylow $2$-subgroup of $A$. Using the Frattini argument we conclude that $R=N_R(S)A$. Thus, $N_G(T)/N_{G_{n-1}}(T)=R/A \cong N_R(S)/N_A(S)$ and so, $N_R(S)/N_A(S)$ contains a solvable subgroup $H_2$ isomorphic to $H_1$. Note that in view of the Feit-Thompson theorem \cite{FeitThompson}, $N_A(S)$ is solvable. Let $H_3$ be the complete preimage of $H_2$ in $N_R(S)$. Note that $H_3$ is solvable. Thus, in view of the Hall theorem \cite[Theorem 6.4.1]{Gorenstein}, $H_3$ contains a Hall $(\cup_{i=1}^{n-1} \pi_i)$-subgroup $H_4$ and $\pi(H_4)= \cup_{i=1}^{n-1} \pi_i$. Let $H$ be the complete preimage of $H_4$ in $N_G(T)$. Note that $H$ is solvable and $\pi(H)=\cup_{i=1}^n \pi_i$.

\end{proof}

\begin{lem}[{\rm See \cite[Lemma~10]{GorshStar}}]\label{FactorCoclique} For a finite group $G$ take a coclique $\rho$ in $GK(G)$ with $|\rho| = 3$. Then the following claims hold{\rm:}

$(i)$ there exists a nonabelian composition factor $S$ of $G$ and a normal subgroup $K$ of $G$ such that $S \cong Inn(S) \unlhd \overline{G} = G/K \le Aut(S)$ and $|\pi(S) \cap \rho| \ge 2$.

$(ii)$ If $\rho'$ is a coclique in $GK(G)$ with $|\rho'| \ge 3$ and $|\pi(S) \cap \rho'| \ge 1$, then $|G|/|S|$ is divisible by at most
one element of $\rho'$. In particular, $|\pi(S) \cap \rho'| \ge |\rho'|-1$ and $S$ is a unique composition factor of $G$ with $|\pi(S) \cap \rho'|\ge 2$.

\end{lem}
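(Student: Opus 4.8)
The plan is to strip off the solvable radical and then track how the three pairwise non-adjacent primes of $\rho$ distribute among the composition factors. Write $\rho=\{p,q,r\}$. First note that $G$ must be non-solvable: a Hall $\rho$-subgroup of a solvable group (which exists by Hall's theorem) would be a solvable $\{p,q,r\}$-group whose prime graph is three isolated vertices, i.e.\ has three connected components, contradicting the Gruenberg--Kegel theorem, by which a solvable group has at most two. Let $K_0$ be the solvable radical of $G$ and put $\overline{G}=G/K_0$; then $\overline{G}$ has trivial solvable radical, so its socle $R=\mathrm{Soc}(\overline{G})=S_1\times\cdots\times S_m$ is a direct product of nonabelian simple groups with $C_{\overline{G}}(R)=1$, and $\overline{G}$ embeds into $\mathrm{Aut}(R)$, permuting the factors. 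The elementary fact I would use repeatedly is that every edge of $GK(\overline{G})$ is an edge of $GK(G)$: if $\overline{x}=xK_0$ has order divisible by two distinct primes, then so does $x$, and a suitable power of $x$ realizes their product. Hence two primes of $\rho$ can never be joined inside $\overline{G}$, and in particular never inside $R$.

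Using this I would localize the coclique. If two primes of $\rho$ divided two \emph{different} factors $S_i$ and $S_j$, then the product of a $p$-element of $S_i$ and a $q$-element of $S_j$ would be an element of order $pq$ in $R\le\overline{G}$, an edge between two coclique primes --- impossible. So all primes of $\rho$ dividing $|R|$ lie in the prime set of a single factor $S:=S_1$. The same device pins down its orbit: if the $\overline{G}$-orbit of $S_1$ contained a second copy $S_1'$ (necessarily with $\pi(S_1')=\pi(S_1)$) while two coclique primes already divided $|S_1|$, then placing one in $S_1$ and the other in $S_1'$ would again produce a forbidden edge. Thus, once at least two primes of $\rho$ divide $|S|$, the factor $S$ is normal in $\overline{G}$; since $C_{\overline{G}}(S)\unlhd\overline{G}$ and meets $S$ trivially, taking $K$ to be its full preimage in $G$ gives $S\cong\mathrm{Inn}(S)\unlhd G/K\le\mathrm{Aut}(S)$, which is the almost simple shape demanded in $(i)$.

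The decisive point, and the step I expect to be hardest, is exactly the hypothesis just used: that at least two of the three independent primes really divide $|S|$, equivalently that at most one of them lies outside $S$ --- in the solvable radical $K_0$, in another socle factor, or in the outer layer $|\overline{G}:R|$. This cannot be reached by counting alone, since a solvable group can by itself carry two non-adjacent primes (the Frobenius group of order $21$ has no element of order $21$). Instead one must combine the third independent prime, which sits in $S$, with the action of $\overline{G}$ on the chief factors of $K_0$ and on $R$: a Frobenius/coprime-action analysis of the type in Lemmas~\ref{LemmaMazurov} and~\ref{fact} then produces an element whose order is one of the forbidden products, contradicting independence. Carrying this out rigorously requires the Vasil'ev--Vdovin adjacency criterion for the prime graph of the simple group $S$, and hence the classification of finite simple groups; this is the genuinely heavy input.

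For $(ii)$ I would rerun the localization for the larger coclique $\rho'$. By the same concentration, all but at most one prime of $\rho'$ lie in a single socle factor $T$. If $T\neq S$, then $|\pi(T)\cap\rho'|\ge|\rho'|-1\ge 2$, while by hypothesis $S$ carries some prime $t\in\pi(S)\cap\rho'$; choosing $u\in\pi(T)\cap\rho'$ with $u\neq t$ and multiplying a $u$-element of $T$ by a $t$-element of $S$ gives an edge between two primes of $\rho'$ --- a contradiction. Hence $T=S$, so $|\pi(S)\cap\rho'|\ge|\rho'|-1$ and at most one prime of $\rho'$ lies outside $S$; that this exceptional prime is the only one dividing the complementary index $|G|/|S|$ follows from the same coprime-action control of $K_0$ and the outer layer as above. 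Finally, any nonabelian composition factor of $G$ is one of the $S_i$, and if one other than $S$ had two primes of $\rho'$, the factor-distinctness argument would again join two primes of $\rho'$; so $S$ is the unique composition factor with $|\pi(S)\cap\rho'|\ge2$.
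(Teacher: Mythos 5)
First, a point of comparison: the paper offers no proof of this lemma at all --- it is imported verbatim from \cite[Lemma~10]{GorshStar} (a variant of Vasil'ev's theorem on cocliques of size three) --- so your attempt can only be measured against that external proof. Your skeleton is the standard one and is correct as far as it goes: the Hall--Gruenberg--Kegel argument excluding solvability, the passage to $\overline{G}=G/K_0$ for the solvable radical $K_0$, the lifting of edges from $GK(\overline{G})$ to $GK(G)$, the concentration of the coclique primes dividing $|\mathrm{Soc}(\overline{G})|$ in a single component $S$, and the choice of $K$ as the preimage of $C_{\overline{G}}(S)$. The genuine gap is precisely the step you yourself call ``decisive'': that at least two primes of $\rho$ actually divide $|S|$, and, for $(ii)$, that at most one prime of $\rho'$ divides $|G|/|S|$. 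This step \emph{is} the lemma --- everything else is routine bookkeeping --- and you do not prove it; you only announce that ``a Frobenius/coprime-action analysis of the type in Lemmas~\ref{LemmaMazurov} and~\ref{fact}'' together with an adjacency criterion for $GK(S)$ would do it. A coclique prime can hide in three different places: in $K_0$ (this case does admit a short argument: if $p,q\in\rho$ both divided $|K_0|$, then an element $x$ of order $r$ normalizing a Hall $\{p,q\}$-subgroup $H$ of $K_0$ --- such an $x$ exists by conjugacy of Hall subgroups in solvable groups --- would have $C_H(x)=1$ since $r$ is adjacent to neither $p$ nor $q$, so $H$ would be nilpotent by Thompson's theorem \cite{Thompson} and $pq\in\omega(G)$); in the components other than $S$ together with $C_{\overline{G}}(S)$; or in the top layer $\overline{G}/\mathrm{Soc}(\overline{G})$, where it may permute components or induce outer automorphisms. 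Controlling the last two possibilities is where the real work lives (nontriviality of fixed points of coprime automorphisms of simple groups, solvability of outer automorphism groups, the diagonal-subgroup argument for permuted components), and none of it is carried out.

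Second, your closing claim that ``any nonabelian composition factor of $G$ is one of the $S_i$'' is false: $\overline{G}/\mathrm{Soc}(\overline{G})$ can itself have nonabelian composition factors (for instance a nonabelian simple group permuting the components of the socle), so the uniqueness statement in $(ii)$ cannot be read off the socle of $G/K_0$ alone. It is, however, an immediate consequence of the first assertion of $(ii)$, had you proved it: any composition factor other than $S$ has order dividing $|G|/|S|$, which is divisible by at most one element of $\rho'$ and hence cannot meet $\rho'$ in two primes.
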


\begin{lem}[{\rm See \cite[Lemma~10]{Zav1999}}]\label{ZavExt} Let $V$ be a normal elementary abelian subgroup of a group $G$. Put
$H=G/V$ and denote by $G_1= V \rtimes H$ the natural semidirect product. Then $\omega(G_1) \subseteq \omega(G)$.
\end{lem}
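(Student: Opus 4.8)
The plan is to compare the orders of elements of $G_1$ and $G$ lying above a common element $h\in H=G/V$, and then to patch the single case where this comparison breaks. Write $V$ additively as a module over $H$, let $p$ be the prime for which $V$ is elementary abelian, and for $h\in H$ let $A$ be the operator by which $h$ acts on $V$ (so $A^{m}=\mathrm{Id}$, where $m=|h|$). A direct induction in $G_1=V\rtimes H$ gives $(v,h)^{m}=(N(v),1)$, where $N=\sum_{i=0}^{m-1}A^{i}$ is the trace of $\langle h\rangle$ on $V$; since $N(v)\in V$ has order $1$ or $p$, the element $(v,h)$ has order $m$ when $N(v)=0$ and order $mp$ otherwise. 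Fixing a lift $g\in G$ of $h$, the analogous computation in $G$ reorganizes the summands $A^{-i}$ into $A^{i}$ (using $A^{m}=\mathrm{Id}$) and yields the affine formula $(gv)^{m}=N(v)+g^{m}$ for $v\in V$; hence $gv$ has order $m$ when $N(v)=-g^{m}$ and order $mp$ otherwise. So above each $h$ the only orders occurring, in either group, are $m$ and $mp$, and it remains to realize in $G$ every order that occurs in $G_1$.

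I would first dispose of an order $n=mp$ occurring in $G_1$, say realized by $(v_0,h)$ with $N(v_0)\neq0$. Then $N\not\equiv0$, so $\mathrm{Im}\,N$ is a nonzero $\mathbb{F}_p$-subspace and has at least $p\ge2$ elements; choosing $v$ with $N(v)\neq-g^{m}$ produces $gv\in G$ of order exactly $mp=n$. Thus every order of $G_1$ that carries the extra factor $p$ is already realized in $G$.

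The harder case is an order $n=m=|h|$ occurring in $G_1$ (via some $(v,h)$ with $N(v)=0$). Here a coset-by-coset argument genuinely fails: if $-g^{m}\notin\mathrm{Im}\,N$, then every lift of $h$ to $G$ has order $mp$, so the coset of $h$ contains no element of order $m$ (this already happens for $G$ cyclic of order $p^{2}$). The remedy is the global fact that $\omega(G/V)\subseteq\omega(G)$ for a normal $p$-subgroup $V$; granting it, $m=|h|\in\omega(H)=\omega(G/V)\subseteq\omega(G)$, finishing the proof. Establishing this inclusion is the main obstacle and is where I would spend the effort.

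To prove $\omega(G/V)\subseteq\omega(G)$ I would take $\bar g\in G/V$ of order $m=p^{a}m'$ with $p\nmid m'$, and pass to the full preimage $D$ of $\langle\bar g\rangle$ in $G$, which is solvable because $D/V=\langle\bar g\rangle$ is cyclic and $V$ is a $p$-group. Write $D=P\rtimes Q$ with $P$ the (normal) Sylow $p$-subgroup, so $P/V\cong C_{p^{a}}$, and $Q=\langle w\rangle\cong C_{m'}$. Since $D/V$ is abelian, $w$ acts trivially on $P/V$, so coprime action gives $C_{P}(w)V/V=P/V$; as the exponent of a quotient divides that of the $p$-group $C_{P}(w)$, the latter contains an element $u$ of order $p^{a}$. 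Then $u$ and $w$ commute and have coprime orders, so $|uw|=p^{a}m'=m$, whence $m\in\omega(D)\subseteq\omega(G)$. With this inclusion in hand both remaining verifications are routine and the lemma follows.
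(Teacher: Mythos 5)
Your proof is correct. Note that the paper does not prove this lemma at all---it is imported verbatim from \cite[Lemma~10]{Zav1999}---so there is no internal argument to compare against; what you have written is a legitimate self-contained substitute for the citation. The structure is sound: the computation $(v,h)^m=(N(v),1)$ in $G_1$ versus $(gv)^m=N(v)+g^m$ in $G$, with $N=\sum_{i=0}^{m-1}A^i$ the same operator in both (since the action of $H$ on $V$ in $G_1$ is by definition the conjugation action of lifts), correctly reduces everything to two cases. The case of an order $mp$ is disposed of cleanly because $\operatorname{Im}N$ is a nonzero subspace and so misses at most one prescribed value $-g^m$. You were right to notice that the case of an order $m=|h|$ cannot be handled coset by coset (your $C_{p^2}$ example is exactly the obstruction) and that the correct patch is the inclusion $\omega(G/V)\subseteq\omega(G)$ for a normal $p$-subgroup $V$; your proof of that inclusion---pass to the preimage $D$ of $\langle\bar g\rangle$, split off a cyclic $p'$-complement $Q=\langle w\rangle$ of the normal Sylow $p$-subgroup $P$ by Schur--Zassenhaus, and use the coprime-action identity $C_{P/V}(w)=C_P(w)V/V$ to find $u\in C_P(w)$ of order $p^a$ with $|uw|=p^am'=m$---is standard and correct (the coprime-action lemma you invoke is \cite[Theorem~6.2.2]{Gorenstein}, and $P$ is solvable, so it applies). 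No gaps.
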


\begin{lem}\label{Extensions} If $G$ is an extension of an elementary abelian group $V$ with the group $H \cong J_4 \times J_4$, then $\omega(G) \setminus \omega(J_4\times J_4) \not = \varnothing$.
\end{lem}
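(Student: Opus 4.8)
The plan is to reduce to a split extension and then exhibit a single new element order by feeding the Frobenius subgroups of $J_4$ into Lemma~\ref{LemmaMazurov} and reading off the combinatorics of $\omega(J_4\times J_4)$ from Lemma~\ref{spectra}. First I would apply Lemma~\ref{ZavExt} to replace $G$ by the natural semidirect product $V\rtimes H$, where $H=H_1\times H_2$ with $H_i\cong J_4$; since $\omega(V\rtimes H)\subseteq\omega(G)$ it suffices to find a new order in $V\rtimes H$, and we may assume $V\ne 1$ (otherwise $G\cong H$ and there is nothing to prove). Let $p$ be the characteristic of the $\mathbb{F}_p$-space $V$. If $p\notin\pi(J_4)$, then $p\in\omega(V\rtimes H)$ while $p\notin\pi(J_4\times J_4)=\pi(J_4)$, so $p$ itself is a new order; hence from now on $p\in\pi(J_4)$.

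Next I would record the arithmetic criterion coming from Lemma~\ref{spectra}(3): a number $n$ lies in $\omega(J_4\times J_4)$ if and only if the set of prime powers $q^{v_q(n)}$ exactly dividing $n$ can be partitioned into two parts, each dividing one of $16,23,24,28,29,30,31,35,37,40,42,43,44,66$, i.e.\ each an element order of $J_4$. Since every one of $23,29,31,37,43$ divides only the element order equal to itself, a $J_4$-order is divisible by at most one of these primes and, if so, equals it. It follows that a product $pqr$ of three distinct primes of $\pi(J_4)$ fails to lie in $\omega(J_4\times J_4)$ precisely when $\{p,q,r\}$ is a coclique of $GK(J_4)$; moreover $23,29,31,37,43$ are pairwise nonadjacent and each of $2,3,5,7,11$ is nonadjacent to all of them, so every prime of $\pi(J_4)$ lies in some $3$-coclique. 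This is the source of the new orders I intend to produce.

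The construction then splits according to $K=C_H(V)$, which by simplicity of $J_4$ is one of $1,H_1,H_2,H$, and uses three building blocks based on the Frobenius subgroups $C_{23}\rtimes C_{11}\le L_2(23)$, $C_{29}\rtimes C_{28}$, $C_{37}\rtimes C_{12}$ and $2^5\rtimes C_{31}\le L_2(32)$ of $J_4$; whenever a factor acts faithfully, each of its nontrivial subgroups (in particular these kernels) acts nontrivially, so Lemma~\ref{LemmaMazurov} applies for any $p$ coprime to the chosen kernel. (a) If a nonzero subspace is centralized by all of $H$, then it is central in $G$, and multiplying an element of order $p$ by an element of $H$ of order $qr$ with $\{p,q,r\}$ a $3$-coclique yields order $pqr\notin\omega(J_4\times J_4)$. (b) If a nonzero $H$-invariant subspace $W$ has $H_1$ faithful and $H_2$ trivial on it, then Lemma~\ref{LemmaMazurov} inside $H_1$ gives an element $x$ of order $p\,n_1$ in $W\rtimes H_1$, and an element $y\in H_2$ of a suitable order $m$ centralizes $W$ and $H_1$, hence $x$, so $xy$ has order $\mathrm{lcm}(p\,n_1,m)$; the complement $n_1$ and $m$ can be chosen via the criterion above so that this is new (for instance $n_1=12,\ m=29$ for $p=23$, giving $2^2\cdot 3\cdot 23\cdot 29$; $n_1=28,\ m=23$ for $p\in\{2,3,11\}$; $n_1=11,\ m=29$ for the remaining $p$). (c) If both factors are faithful ($K=1$), I would pick a Frobenius $F_2\rtimes\langle c_2\rangle\le H_2$ with $p\nmid|F_2|$, use Lemma~\ref{LemmaMazurov} to get $W=C_V(c_2)\ne 0$, and look at the action of $H_1$ on $W$: if it is nontrivial it is faithful (simplicity), so a second application of Lemma~\ref{LemmaMazurov} inside $H_1$ gives $0\ne v\in C_V(c_1)\cap C_V(c_2)$, whence $v\,c_1c_2$ has order $p\cdot o(c_1)\cdot o(c_2)$, new for a suitable coprime pair of complement orders (e.g.\ $o(c_1)=11$, $o(c_2)=31$).

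The main obstacle is exactly this fully faithful case (c), and its resolution is the conceptual heart of the argument. The only remaining possibility there is that $H_1$ acts trivially on $W=C_V(c_2)$, so that $0\ne W\le C_V(H_1)$; I would then feed the $H$-invariant subspace $C_V(H_1)$, on which $H_1$ is trivial, back into blocks (a) and (b): either $H_2$ acts nontrivially on it, which is case (b) with the roles of the factors reversed, or $C_V(H_1)\le C_V(H)$, which is case (a). The routine but essential bookkeeping is to verify, for every $p\in\pi(J_4)$, that the orders produced in (a)--(c) indeed lie outside $\omega(J_4\times J_4)$, which is done uniformly via the coclique/partition criterion; the decisive device is the iterated use of Mazurov's lemma together with the simplicity of $J_4$, which upgrades ``nontrivial on a subspace'' to ``faithful on a subspace'' and thereby forces the common fixed vector needed to build the new element order.
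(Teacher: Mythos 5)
Your overall plan coincides with the paper's: reduce to the split extension $V\rtimes(H_1\times H_2)$ via Lemma~\ref{ZavExt} and then feed Frobenius subgroups of the two $J_4$-factors into Lemma~\ref{LemmaMazurov} to manufacture an order outside $\omega(J_4\times J_4)$ (the paper organizes the cases by the characteristic $p$ rather than by $C_H(V)$, but that is cosmetic). Cases (a) and (b) of your argument go through. The genuine gap is in case (c) when $p=2$. There your mechanism produces a common fixed vector $v$ of order $2$ for two cyclic Frobenius complements $c_1,c_2$ whose kernels must have odd order, and the new element order is claimed to be $\mathrm{lcm}(2,o(c_1),o(c_2))$. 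But the isolated primes $23,29,37,43$ occur in $J_4$ only as Frobenius \emph{kernels}, and $31$ occurs as a complement only over the kernel $2^5$, which is unusable in characteristic $2$; hence $o(c_1)$ and $o(c_2)$ are confined to $\{2,3,5,7,11\}$-numbers such as $5,11,12,14,28$. Every resulting value of $\mathrm{lcm}(2,o(c_1),o(c_2))$ already lies in $\omega(J_4\times J_4)$: for instance your own sample pair gives $2\cdot 11\cdot 31=\mathrm{lcm}(22,31)$, and $\mathrm{lcm}(2,11,28)=308=\mathrm{lcm}(44,28)$, $\mathrm{lcm}(2,12,14)=84\mid\mathrm{lcm}(24,28)$, $\mathrm{lcm}(2,5,28)=140\mid\mathrm{lcm}(40,7)$, and so on. So the common-fixed-vector route provably cannot close the case $p=2$ with both factors faithful, and this is precisely where the paper needs an ingredient you do not have: it invokes \cite[Lemma~5]{Zav2006} applied to $U_3(11)\le H_2$ to guarantee $2\cdot 37\in\omega(V\rtimes H_2)$, i.e.\ an element $z$ of order $37$ with $C_V(z)\neq 0$, after which $29{:}28\le H_1$ acting on $C_V(z)$ yields $2\cdot28\cdot37$ or $2\cdot29\cdot37$, both outside $\omega(J_4\times J_4)$.

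Your argument could be repaired without that external reference, but only by switching to the \emph{other} conclusion of Lemma~\ref{LemmaMazurov}: since $H_1$ is faithful on $W=C_V(c_2)$, the kernel of a Frobenius subgroup $29{:}28\le H_1$ acts nontrivially on $W$, so $W\rtimes\langle c_1\rangle$ contains an element of order $2\cdot 28=56$; this element is centralized by $c_2$ of order $11$, giving $616=2^3\cdot7\cdot11\notin\omega(J_4\times J_4)$. That, however, is not what you wrote. Two smaller slips of the same flavour: in case (b) the choice $n_1=11$, $m=29$ fails for $p=29$ (it yields $11\cdot29=\mathrm{lcm}(11,29)\in\omega(J_4\times J_4)$), and in case (c) the pair $(11,31)$ also fails for $p=3$ since $3\cdot11\cdot31=\mathrm{lcm}(33,31)\in\omega(J_4\times J_4)$; these are fixable by other choices, but they show that the ``routine bookkeeping'' you defer is exactly where the difficulty lives, and for $p=2$ it cannot be completed within your stated scheme.
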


\begin{proof} We can assume that $\pi(V) \subset \pi(J_4) = \{2,3,5,7,11,23,29,31,37,43\}$. In view of Lemma~\ref{ZavExt}, we can assume that $G=V \rtimes H$, where $H \le G$ and $H=H_1\times H_2$ for $H_1\cong H_2 \cong J_4$.

Assume that $V$ is a $2$-group. In view of \cite{Atlas2}, $H_2$ contains a subgroup isomorphic to $U_3(11)$. So, in view of \cite[Lemma~5]{Zav2006}, the subgroup $V \rtimes H_2$ contains an element of order $2\cdot 37$. Therefore there is an element $z\in H_2$ such that $|z|=37$ and $C_V(\langle z\rangle)$ is non-trivial. Let $V_1= C_V(\langle z\rangle)$. Note that $C_G(\langle z\rangle)$ contains a subgroup $V_1 \rtimes H_1$.  In view of \cite{Atlas2}, $H_1$ contains a Frobenius group $\langle x\rangle \rtimes \langle y \rangle$, where $|x|=29$ and $|y|=28$. In view of Lemma~\ref{LemmaMazurov}, the group $V_1 \rtimes H_1$ contains either an element of order $2 \cdot 29$ or an element of order $2 \cdot 28$. So, $G$ contains either an element of order $2\cdot 28 \cdot 37$ or an element of order $2\cdot 29 \cdot 37$. Thus, $\omega(G) \setminus \omega(J_4\times J_4) \not = \varnothing$.

Assume that $V$ is a $3$-group. In view of \cite{Atlas2}, $H_2$ contains a Frobenius group $\langle z\rangle \rtimes \langle t \rangle$, where $|z|=37$ and $|t|=3$. If $9 \in \omega (V\rtimes H_2)$, then  $\omega(G) \setminus \omega(J_4\times J_4) \not = \varnothing$. Suppose that $9 \not \in \omega (V\rtimes H_2)$. In view of Lemma~\ref{LemmaMazurov}, $V \le C_G(\langle z\rangle)$. So, $V \rtimes H_1 \le C_G(\langle z\rangle)$. In view of \cite{Atlas2}, $H_1$ contains a Frobenius group $\langle x\rangle \rtimes \langle y \rangle$, where $|x|=29$ and $|y|=28$. In view of Lemma~\ref{LemmaMazurov}, the group $V \rtimes H_1$ contains either an element of order $3 \cdot 29$ or an element of order $3 \cdot 28$. So, $G$ contains either an element of order $3 \cdot 28 \cdot 37$ or an element of order $3 \cdot 29 \cdot 37$. Thus, $\omega(G) \setminus \omega(J_4\times J_4) \not = \varnothing$.

Assume that $V$ is a $5$-group. In view of \cite{Atlas2}, $H_2$ contains a Frobenius group $\langle z\rangle \rtimes \langle t \rangle \le L_2(11) \le M_{22} \le H_2$, where $|z|=11$ and $|t|=5$. If $25 \in \omega (V\rtimes H_2)$, then  $\omega(G) \setminus \omega(J_4\times J_4) \not = \varnothing$. Suppose that $25 \not \in \omega (V\rtimes H_2)$. In view of Lemma~\ref{LemmaMazurov}, $V \le C_G(\langle z\rangle)$. So, $V \rtimes H_1 \le C_G(\langle z\rangle)$. In view of \cite{Atlas2}, $H_1$ contains a Frobenius group $\langle x\rangle \rtimes \langle y \rangle \le L_5(2) \le H_1$, where $|x|=31$ and $|y|=5$. In view of Lemma~\ref{LemmaMazurov}, the group $V \rtimes H_1$ contains either an element of order $25$ or an element of order $5 \cdot 31$. So, $G$ contains either an element of order $25$ or an element of order $5\cdot 11 \cdot 31$. Thus, $\omega(G) \setminus \omega(J_4\times J_4) \not = \varnothing$.

Assume that $V$ is a $7$-group. In view of \cite{Atlas2}, $H_2$ contains a Frobenius group $\langle z\rangle \rtimes \langle t \rangle$, where $|z|=43$ and $|t|=7$, and $H_1$ contains a Frobenius group $\langle x\rangle \rtimes \langle y \rangle$, where $|z|=29$ and $|t|=7$. Similar as above, we conclude that either $49 \in \omega(G)$ or $7\cdot 29 \cdot 43 \in \omega(G)$. Thus, $\omega(G) \setminus \omega(J_4\times J_4) \not = \varnothing$.

Assume that $V$ is an $11$-group. In view of \cite{Atlas2}, $H_2$ contains a Frobenius group $\langle z\rangle \rtimes \langle t \rangle$, where $|z|=23$ and $|t|=11$, and $H_1$ contains a Frobenius group $\langle x\rangle \rtimes \langle y \rangle$, where $|z|=43$ and $|t|=7$. Similar as above, we conclude that either $121 \in \omega(G)$ or either $7\cdot 11 \cdot 23 \in \omega(G)$ or $11 \cdot 23 \cdot 43 \in \omega(G)$. Thus, $\omega(G) \setminus \omega(J_4\times J_4) \not = \varnothing$.

Assume that $V$ is a $23$-group. In view of \cite{Atlas2}, $H_2$ contains a Frobenius group $\langle z\rangle \rtimes \langle t \rangle$, where $|z|=11$ and $|t|=5$, and $H_1$ contains a Frobenius group $\langle x\rangle \rtimes \langle y \rangle$, where $|z|=29$ and $|t|=28$. Similar as above, we conclude that $\omega(G)$ contains an element of one of the following orders{\rm:} $5\cdot 23 \cdot 28$, $5 \cdot 23 \cdot 29$, $11 \cdot 23 \cdot 28$, $11 \cdot 23 \cdot 29$. Thus, $\omega(G) \setminus \omega(J_4\times J_4) \not = \varnothing$.

Assume that $V$ is a $29$-group. In view of \cite{Atlas2}, $H_2$ contains a Frobenius group $\langle z\rangle \rtimes \langle t \rangle$, where $|z|=23$ and $|t|=11$, and $H_1$ contains a Frobenius group $\langle x\rangle \rtimes \langle y \rangle$, where $|z|=43$ and $|t|=7$. Similar as above, we conclude that $\omega(G)$ contains an element of one of the following orders{\rm:} $7\cdot 11 \cdot 29$, $7 \cdot 23 \cdot 29$, $11 \cdot 29\cdot 43$, $23 \cdot 29 \cdot 43$. Thus, $\omega(G) \setminus \omega(J_4\times J_4) \not = \varnothing$.

Assume that $V$ is a $p$-group, where $p \in \{31,37,43\}$. In view of \cite{Atlas2}, $H_2$ contains a Frobenius group $\langle z\rangle \rtimes \langle t \rangle$, where $|z|=23$ and $|t|=11$, and $H_1$ contains a Frobenius group $\langle x\rangle \rtimes \langle y \rangle$, where $|z|=29$ and $|t|=7$. Similar as above, we conclude that $\omega(G)$ contains an element of one of the following orders{\rm:} $7 \cdot 11 \cdot p$, $7 \cdot 23 \cdot p$, $11 \cdot 29\cdot p$, $23 \cdot 29 \cdot p$. Thus, $\omega(G) \setminus \omega(J_4\times J_4) \not = \varnothing$.

\end{proof}

\section{Proof of Proposition~\ref{rzr2}}

Let $G$ be a solvable group such that $\sigma(G)=2$ and for any $p,q\in\pi(G)$ the following conditions hold{\rm:}

{$(1)$ $p$ does not divide $q-1${\rm;}}

{$(2)$ $pq \in \omega(G)$.}

In view of the Hall theorem \cite[Theorem 6.4.1]{Gorenstein}, it's enough to prove that $|\pi(G)|\not=4$. Suppose to the
contradiction that $G$ is a group of the least order satisfying the conditions $(1)$ and $(2)$, and $|\pi(G)|=4$.

In view of condition $(1)$, any element of $\pi(G)$ is odd. Thus, $G$ does not contain a generalized quaternion group as its Sylow subgroup.
Moreover, for any Sylow $p$-subgroup $S$ of $G$, the subgroup $\Omega_1(S)$ is non-cyclic,  in particular, $S$ is non-cyclic. Otherwise there
is $g \in G$ such that $|g|=p$ and $|\pi(C_G(g))\setminus\{p\}|=3$. A contradiction to Lemma \ref{keller2}.

Let $H_1$ be a minimal normal subgroup of $G$. From solvability of $G$ it follows that $H_1$ is an elementary abelian $p_1$-group for some
$p_1 \in \pi(G)$. Let $T_1$ be a Hall $(\pi(G)\setminus \{p_1\})$-subgroup of $G$. Sylow subgroups of $T_1$ are non-cyclic. It follows from
Lemma \ref{fr} that  $p_1t_1 \in \omega (H_1T_1)$ for each $t_1 \in \pi(T_1)$. Thus, $G=H_1T_1$ in view of minimality of $G$.

Let $H_2$ be a minimal normal subgroup of $T_1$. Then $H_2$ is an elementary abelian $p_2$-group for some $p_2\in \pi(T_1)$. Let $T_2$ be a
Hall $(\pi(T_1)\setminus\{p_2\})$-subgroup of $T_1$. Since $p_2-1$ is not divisible by primes from $\pi(T_2)$, we see that $H_2$ is non-cyclic.
Otherwise, it follows from Lemma \ref{fr}  that any element of prime order from $T_2$ centralizes $H_2$ and $p_3p_4 \in \pi(T_2)$. So, there exists $g \in C_{T_1}(H_2)$ such that $|\pi(g)| \ge 3$.
Sylow subgroups of $T_1$ are non-cyclic, consequently, $p_2t_2 \in \omega (H_2T_2)$ for each $t_2 \in \pi(T_2)$. Since $H_2$ is non-cyclic, we have $p_1p_2 \in \omega(H_1H_2)$. Thus, $G=H_1H_2T_2$ in view of minimality of $G$.

Let $R=Soc(T_2)$. Suppose that $R$ is cyclic.

Suppose that there exists a non-trivial subgroup $L$ of $R$ such that $|L|=p_3 \in \pi(T_2)$
and $C_{H_1H_2}(L)$ is non-trivial. Then $L$ is a characteristic subgroup of $R$ and so, $L$ is normal in $T_2$.
Since $p_3-1$ is not divisible by $p_4$, we see that $\Omega_1(H_4)<C_G(L)$, where $H_4$ is a Sylow $p_4$-subgroup of $T_2$ with $p_4\neq p_3$. Note that $C_{H_1H_2}(L) \unlhd C_G(L)$. Since
the subgroup $\Omega_1(H_4)$ is non-cyclic, in view of Lemma~\ref{fr}, there exists an element $g \in C_G(L)$ such that $|\pi(g)| \ge 3$, a contradiction.  Thus, $L$ acts fixed-point free on $H_1H_2$. So, by the Thompson theorem \cite{Thompson} $H_1H_2$ is nilpotent.

Let $F=F(T_2)$. Suppose that $F$ is cyclic.

Note that $F\neq T_2$ since Sylow subgroups of $T_2$ are non-cyclic.
Let $F<K\unlhd T_2$ and $K/F$ is an elementary abelian $p_j$-group for some  $p_j\in \pi(T_2)$. Since $K\neq F$, a Sylow $p_j$-subgroup
of $K$ acts non-trivially on the Sylow $p_i$-subgroup of $F$, where $p_i\neq p_j$. Thus, an element whose order is a power of $p_j$ acts non-trivially on a cyclic $p_i$-subgroup, and $p_j$ does not divide $p_i-1$, a contradiction to Lemma~\ref{fr}.

So, $F$ is non-cyclic.
Consequently, there exists a non-cyclic Sylow $p_3$-subgroup $P_3$ of $F$, which is characteristic in $F$, and so, is normal in $T_2$. Consider the group $T_3=P_3H_4$, where $H_4$ is a Sylow $p_4$-subgroup of $T_2$. Suppose that $\Omega_1(H_4)<C_{T_3}(P_3)$. Since $p_2p_4 \in \omega(G)$, there is $g \in \Omega_1(H_4)$ such that a subgroup $C_{T_1}(g)\cap H_2$ is non-trivial. Note that $\Omega_1(H_4)$ is non-cyclic. Consequently, in view of Lemma~\ref{fr}, there exists an element $t \in C_{T_1}(g)$ such that  $|t|=p_2p_3p_4$. A contradiction. Thus, there is $h \in H_4$ acting non-trivially on $P_3$.

Suppose that there exists an element $h_3 \in P_3$ such that $h_3$ centralizes a subgroup $H_j$ for some $j \in \{1, 2\}$.
Since $H_1H_2=H_1\times H_2$,  $p_1p_4 \in \omega(G)$, and $p_2p_4 \in \omega(G)$, there exists $t \in H_j$ such that $|\pi(C_G(t))|=4$.
In view of Lemma~\ref{keller2}, there exists $t_1 \in C_G(t)$ such that $|\pi(t_1)|\ge 3$; a contradiction. Similar, if $h$ centralizes a subgroup $H_j$ for some $j \in \{1, 2\}$, then taking into account that $H_1H_2=H_1\times H_2$ and $P_3$ is non-cyclic, we conclude that there exists $t \in H_j$ such that $|\pi(C_G(t))|=4$ and receive a contradiction. Thus, the group $P_3\langle h\rangle$ acts faithfully both on $H_1$ and on $H_2$, and $[P_3,h]\not = 1$.

Since $p\neq 2$, with using Lemma \ref{fact} we obtain that for each $j \in \{1, 2\}$ the minimal polynomial of $h$ over $H_j$  is equal to $x^{p_4}-1$. Consequently, $C_{H_j}(h)\neq \{1\}$ for each $j \in \{1, 2\}$.  Thus, the intersections $C_G(h) \cap H_1$ and $C_G(h) \cap H_2$ are non-trivial and so, $p_1p_2p_4 \in \omega(G)$. A contradiction.

We have that $R$ is non-cyclic.

Let $H_3$ be a non-cyclic Sylow subgroup of $R$ and $\{p_3\}=\pi(H_3)$. Consider the subgroup $H=H_1H_2H_3H_4$ of $G$, where $H_1$, $H_2$, and $H_3$ are non-cyclic elementary abelian groups, and $H_4$ is a Sylow $p_4$-subgroup of $T_2$, where $p_4\not =p_3$.

Suppose that there exists $K_3 \leq H_3$ such that $|K_3| \geq p_3^2$ and $K_3<C_H(H_i)\leq C_G(H_i)$ for some $j \in \{1, 2\}$.
Since $H_3$ is elementary abelian, we have that $K_3$ is elementary abelian too. Moreover, there exists $g \in H_i$ such that $C_H(g) \cap H_j$ is
non-trivial, where $j \in \{1,2\} \setminus \{i\}$.  Consequently, $p_1p_2p_3 \in \omega(C_G(g)) \subseteq \omega(G)$; a contradiction.

Suppose that $\Omega_1(H_4)<C_G(H_3)$. Remind that $H_3$ is non-cyclic so, here exists $g \in H_3$ such that $C_G(g) \cap H_2$ is non-trivial. Thus, taking into account that $\Omega_1(H_4)$ is non-cyclic, we conclude that $p_4p_3p_2 \in \omega(C_G(g)) \subseteq
\omega(G)$. A contradiction. Thus, there exists $h \in \Omega_1(H_4)$ such that $h$ acts on $H_3$ non-trivially, i.\,e. $[H_3,h]\not=1$.

Suppose that there exists $l\in \Omega_1(H_4)$ such that $H_3<C_G(l)$. Then taking into account that $H_3$ is non-cyclic we conclude that $l$ acts fixed point free on $H_1H_2$. Consequently, the subgroup $H_1H_2$ is nilpotent by the Thompson theorem \cite{Thompson}. Since $H_3$ is abelian, we have $H_3=[h,H_3]\times C_{H_3}(h)$ in view of \cite[Theorem~5.2.3]{Gorenstein}. Moreover, $[h,H_3]\langle h
\rangle$ is a Frobenius group, and  $|[h,H_3]|\geq p_3^2$ in view of Lemma~\ref{fr} since $p_4$ does not divide $p_3-1$. In a similar way as before, we receive that the group $[h,H_3]\langle h\rangle$ acts non-trivially on both $H_1$ and on $H_2$. Therefore $C_G(h) \cap H_1$ is non-trivial and $C_G(h) \cap H_2$ is non-trivial in view of Lemma \ref{fact}. So, $p_1p_2p_4 \in \omega(G)$. A
contradiction. Thus, any element from $\Omega_1(H_4)$ acts non-trivially on $H_3$.

Since $\Omega_1(H_4)$ is non-cyclic, there exists $m\in \Omega_1(H_4)$ such that $C_G(m) \cap H_3$ is non-trivial. Consider subgroups $H_i([H_3,m]\langle
m\rangle)$, where $H_i \in \{H_1,H_2\}$. Note, $[m,H_3]\langle m \rangle$ is a Frobenius group and $|[m,H_3]|\geq p_3^2$, so, $[H_3,m]$ acts
non-trivially on $H_i$. In view of Lemma \ref{fact}, $C_G(m) \cap H_i$ is non-trivial. Thus, $|\pi(C_G(m))|=4$.  Consequently, in view of Lemma~\ref{keller2}, there exists an element $u \in C_G(m)$ such that $|\pi(u)| \ge 3$. A contradiction. \qed\medskip

\section{Proof of Theorem}
Let $G$ be a finite group such that $\omega(G)=\omega(J_4\times J_4)$. The spectrum of $G$ could be found in Lemma~\ref{spectra}.

Put $$\pi_1=\{5, 11, 23, 29, 31, 37, 43\},$$ $$\pi_2=\{7, 11, 23, 29, 31,
37, 43\},$$ and $$\pi=\pi_1\cup\{7\}=\pi_2\cup\{5\}=\pi_1\cup\pi_2.$$

\begin{lem}\label{PKTRJ4J4} Let $H$ be a section of $G$ such that $\pi(H) \subseteq \pi_i$ for some $i\in \{1,2\}$. Then $\sigma(H) \le 2$.

\end{lem}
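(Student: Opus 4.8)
The plan is to reduce the statement about an arbitrary section $H$ to a statement about the element orders of $G$ itself, and then read off the answer from Lemma~\ref{spectra}. The first observation I would record is that $\omega(G)$ is closed under taking divisors: if $m \in \omega(G)$ and $d \mid m$, then an element of order $m$ generates a cyclic group containing an element of order $d$, so $d \in \omega(G)$. Granting this, I claim $\omega(H) \subseteq \omega(G)$ for every section $H = K/L$ of $G$. Indeed, every element of the subgroup $K$ is an element of $G$, so $\omega(K) \subseteq \omega(G)$; and for a coset $gL \in K/L$ the order $|gL|$ divides the order $|g|$ of any preimage $g \in K$, whence $|gL|$ divides an element of $\omega(K) \subseteq \omega(G)$ and therefore lies in $\omega(G)$ by divisor-closedness. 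Thus no element order of a section can involve more primes than the element orders of $G$ permit.

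Second, I would translate the desired inequality $\sigma(H) \le 2$ into a counting statement about $\omega(G)$. Since $\pi(H) \subseteq \pi_i$, every $h \in H$ has $\pi(h) \subseteq \pi_i$, while $|h| \in \omega(H) \subseteq \omega(G)$. Hence it suffices to prove that every $m \in \omega(G)$ with $\pi(m) \subseteq \pi_i$ satisfies $|\pi(m)| \le 2$; this is exactly the assertion $\sigma(H) \le 2$.

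The third step is the explicit computation with Lemma~\ref{spectra}. By part~(3), each such $m$ equals $\mathrm{lcm}(a,b)$ for some $a,b \in \omega(J_4)$, so $\pi(m) \subseteq \pi(a) \cup \pi(b)$. I would then run through the fourteen generating numbers in part~(2) (together with their divisors) and verify that each element order of $J_4$ contains at most one prime from $\pi_1$ and at most one prime from $\pi_2$. Consequently $|\pi(m) \cap \pi_i| \le |\pi(a) \cap \pi_i| + |\pi(b) \cap \pi_i| \le 1 + 1 = 2$, and since $\pi(m) \subseteq \pi_i$ forces $\pi(m) = \pi(m) \cap \pi_i$, we obtain $|\pi(m)| \le 2$, as required.

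I expect the only point of care -- the main obstacle -- to be the bookkeeping in this last step, and in particular the reason the two sets $\pi_1$ and $\pi_2$ are defined separately rather than as a single set. The order $35 = 5 \cdot 7$ belongs to $\omega(J_4)$ and carries both $5$ and $7$, so any set containing both would already admit an element order of $J_4$ meeting it in two primes, destroying the ``at most one prime'' property on which the whole argument rests. Separating $5$ and $7$ into $\pi_1$ and $\pi_2$ is exactly what guarantees the property; the remaining primes $11, 23, 29, 31, 37, 43$ never occur alongside another prime of $\pi_i$ in a single element order of $J_4$, which one checks directly against the list in Lemma~\ref{spectra}(2) ($11$ appears only in $44$ and $66$, while $23, 29, 31, 37, 43$ appear only as isolated prime orders).
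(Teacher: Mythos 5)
Your proof is correct and follows the same route as the paper, whose entire argument is the one-line remark that the claim ``follows directly from Lemma~\ref{spectra}''; you have simply made explicit the two reductions (element orders of sections divide element orders of $G$, and the check that each number in Lemma~\ref{spectra}(2) meets $\pi_i$ in at most one prime) that the authors leave to the reader. Your closing observation about $35=5\cdot 7$ correctly identifies why $5$ and $7$ must be split between $\pi_1$ and $\pi_2$.
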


\begin{proof} Follows directly from Lemma~\ref{spectra}.

\end{proof}

\begin{lem}\label{silcik}
Let $p \in \pi$ and $P \in Syl_p(G)$. Then $P$ is non-cyclic.
\end{lem}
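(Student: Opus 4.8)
The plan is to argue by contradiction, assuming $P=\langle x\rangle$ is cyclic. First I would record, from Lemma~\ref{spectra}, that $p^2\notin\omega(J_4\times J_4)=\omega(G)$ for every $p\in\pi$: each such prime occurs in $\omega(J_4)$ only to the first power (the large primes $23,29,31,37,43$ as themselves, and $5,7,11$ with exponent $1$ in the spectrum), and $\mathrm{lcm}(p,p)=p$. Hence every Sylow $p$-subgroup of $G$ has exponent $p$, so a cyclic $P$ must have order $p$, and every subgroup of order $p$ is then a Sylow $p$-subgroup and therefore conjugate to $P$. Since $\langle x\rangle$ is a central Sylow $p$-subgroup of $C_G(x)$, Burnside's normal $p$-complement theorem \cite{Gorenstein} yields a direct decomposition $C_G(x)=\langle x\rangle\times M$ with $p\nmid|M|$; this decomposition is exactly what will later turn an isomorphism type into a numerical contradiction.

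Next I would determine the primes visible in $C_G(x)$. Because each prime of $\pi(J_4)$ lies in $\omega(J_4)$ and $\omega(J_4\times J_4)$ is closed under pairwise least common multiples (Lemma~\ref{spectra}), the graph $GK(G)$ is complete; in particular $pq\in\omega(G)$ for every $q\in\pi(G)\setminus\{p\}$. Conjugating the $p$-part of an element of order $pq$ into $\langle x\rangle$ shows $q\in\pi(C_G(x))$, whence $\pi(M)=\pi(G)\setminus\{p\}$. Although the complete graph $GK(G)$ contains no coclique, the graph $GK(C_G(x))$ does: for distinct large primes $q,r\in\{23,29,31,37,43\}$ one reads off Lemma~\ref{spectra} that $pqr\notin\omega(G)$, since in $\omega(J_4)$ a large prime never occurs together with a second prime, so an $\mathrm{lcm}$ of two element orders can cover at most two of $p,q,r$ once two of them are large. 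As any element of order $qr$ in $C_G(x)$ would lie in $M$ and give an element of order $pqr$, the set $\rho'=\{23,29,31,37,43\}\setminus\{p\}$, of size at least $4$, is a coclique in $GK(C_G(x))$.

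Finally I would feed a three-element subset $\rho\subseteq\rho'$ into Lemma~\ref{FactorCoclique}, applied to the group $C_G(x)$: it produces a nonabelian composition factor $S$ of $C_G(x)$ with $|\pi(S)\cap\rho|\ge2$, and part (ii) applied to $\rho'$ then forces $|\pi(S)\cap\rho'|\ge|\rho'|-1\ge3$. Since $S$ is nonabelian and $C_G(x)=\langle x\rangle\times M$, $S$ is a composition factor of $M$, so $\pi(S)\subseteq\pi(M)\subseteq\pi(J_4)$ and $\omega(S)\cap\{9,25\}=\varnothing$ (because $9,25\notin\omega(G)$). Thus $S$ meets the hypotheses of Lemma~\ref{FactrosWithProp} with at least three primes from $\{11,23,29,31,37,43\}$; as cases (1)--(6) there contribute only two such primes, we are in case (7), i.e. $S\cong J_4$. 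But then $p\in\pi(J_4)=\pi(S)\subseteq\pi(M)$, contradicting $p\nmid|M|$, and $P$ must be non-cyclic. The crux, and the main obstacle, is precisely the passage from the complete graph $GK(G)$, which offers no cocliques, to the centralizer $C_G(x)$, in which the large primes of $J_4$ become an independent set; the Burnside decomposition $C_G(x)=\langle x\rangle\times M$ is the device that makes the forced identification $S\cong J_4$ impossible.
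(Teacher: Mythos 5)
Your proof is correct and follows essentially the same strategy as the paper's: both show that every other prime of $\pi(G)$ occurs in the centralizer of the cyclic Sylow $p$-subgroup, exhibit there a coclique consisting of large primes, and invoke Lemma~\ref{FactorCoclique} together with Zavarnitsine's classification of simple groups with narrow prime spectrum to reach a contradiction. The differences are minor: the paper works in the quotient $C_G(P)/P$ with the size-$6$ coclique $\pi_i\setminus\{p\}$ (treating the solvable case separately via Lemma~\ref{keller2}) and cites \cite{Zav} directly, whereas you use Burnside's decomposition $C_G(x)=\langle x\rangle\times M$, the coclique $\{23,29,31,37,43\}\setminus\{p\}$, and Lemma~\ref{FactrosWithProp}, forcing $S\cong J_4$ and contradicting $p\nmid|M|$.
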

\begin{proof}  Suppose to the contradiction that there exists $p \in \pi$ such that $P \in Syl_p(G)$ and $P$ is cyclic. Denote by $\theta$ a
set $\pi_i$ such that $p \in \pi_i$. Let $C=C_G(P)/P$.
 We have $\pi(C)=\pi(G)\setminus\{p\}$, and any two primes from $\theta\setminus\{p\}$ are no-adjacent in $GK(C)$ in view of Lemma~\ref{spectra}.

 Suppose that $C$ is solvable. In view of the Hall theorem \cite[Theorem 6.4.1]{Gorenstein}, there exists a $\theta$-Hall subgroup $C_1$ of $C$. Note that $\sigma(C_1)=1$ and
 $|\pi(C_1)|=6$, a contradiction to Lemma~\ref{keller2}.

Note that $|\pi(C)\cap \theta|=6$. In view of Lemma~\ref{FactorCoclique}, there exists a nonabelian composition factor $R$ of $C$ such that $5 \le |\pi(R)\cap\pi| \le 6$ and $\pi (R) \subseteq \pi(G)\setminus\{p\}=\{ 2, 3, 5, 7, 11, 23, 29, 31, 37, 43\}\setminus\{p\}$. In view of \cite{Zav}, there is no a finite nonabelian simple group $R$ satisfying these conditions. A contradiction.
  \end{proof}

 \medskip


In view of  Proposition~\ref{rzr2}, we have $G\not\in E_{\rho}$ for $\rho=\{29, 31, 37, 43\}$. From Lemma \ref{hall} and the Sylow theorems it follows that there exists a
composition factor $S$ of $G$ such that $|\{29, 31, 37, 43\}\cap\pi(S)|\geq2$ and $\pi(S)\subseteq\pi(J_4)$. Thus, $S \cong J_4$ in view of Lemma~\ref{FactrosWithProp}.

Let $G_1=G$ and $C_1=Soc(G)$. For $i\ge 2$ we put $G_i=G_{i-1}/C_{i-1}$ and $C_i=Soc(G_i)$.

Let $s$ be the minimal number such that $C_s$ contains a compositional factor of $G$ which is isomorphic to $S$.

\begin{lem}\label{onze}
We have $11\in\pi(|G|/|S|)$.
\end{lem}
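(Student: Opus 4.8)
The plan is to argue by contradiction. Since $S\cong J_4$ gives $v_{11}(|S|)=v_{11}(|J_4|)=3$ and $S$ is a composition factor, we always have $v_{11}(|G|)\ge 3$, and the statement $11\in\pi(|G|/|S|)$ is exactly $v_{11}(|G|)\ge 4$. So I would assume $11\notin\pi(|G|/|S|)$, i.e. $v_{11}(|G|)=3$; then $S\cong J_4$ is the \emph{unique} composition factor of $G$ whose order is divisible by $11$, and it occurs with multiplicity one. The whole task is to show this is incompatible with $\omega(G)=\omega(J_4\times J_4)$.

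First I would pin down the global structure forced by this uniqueness. Let $B/A$ be the chief factor of $G$ realizing this $J_4$; a product of $k\ge 2$ copies would contribute $11^{3k}$, so $B/A\cong J_4$ and $A$ is an $11'$-group. Because $\Out(J_4)=1$, conjugation yields $G/C_G(B/A)\cong\Aut(J_4)=J_4$; put $D=C_G(B/A)$. As $J_4$ is the only $11$-divisible composition factor, $D$ is an $11'$-group and $G/D\cong J_4$. I would also note that a Sylow $11$-subgroup $P$ of $G$ is then extraspecial $11^{1+2}$: it has exponent $11$ (since $121\notin\omega(G)$ by Lemma~\ref{spectra}), and as the $J_4$-section already needs a Sylow $11$-subgroup of order $11^3$, a Sylow $11$-subgroup of its preimage maps isomorphically onto it for order reasons.

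Next I would locate the primes $23,29,31,37,43$. By Lemma~\ref{spectra} the graph $GK(G)$ is complete (any two primes of $\pi(J_4)$ occur as a product in $\omega(J_4\times J_4)$), whereas in $GK(G/D)=GK(J_4)$ these five primes are isolated. Hence if two of them, say $p,q$, both failed to divide $|D|$, then $D$ would be a $\{p,q\}'$-group, the $p$- and $q$-parts of an element of order $pq$ (which exists by completeness) would map injectively into $G/D\cong J_4$, forcing $pq\in\omega(J_4)$ — impossible. So at least four of the five big primes divide $|D|$. The intended contradiction is then to exhibit a composition factor of $D$ isomorphic to $J_4$: since $11\mid|J_4|$, this contradicts $D$ being an $11'$-group. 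To name such a factor I would use that, for a single $11$-element $x$, no two distinct big primes can lie in $C_G(x)$ via commuting elements (that would give $11pq\in\omega(G)$, impossible as no member of $\omega(J_4)$ carries two of $\{11,p,q\}$), so the big primes occurring in a suitable $11$-local subgroup $C$ form a coclique; Lemma~\ref{FactorCoclique} together with Lemma~\ref{FactrosWithProp} (whose hypotheses $9,25\notin\omega(\cdot)$ hold for every section of $G$, by lifting an element of order $9$ or $25$ back to $G$) then forces the factor to be $J_4$. The finish is clean: the distinguished $11$-element lies in $Z(C)$, so if that $J_4$ were the unique $11$-divisible factor of $C$, the same $\Out(J_4)=1$ argument would push the $11$-element into $Z(J_4)=1$, a contradiction.

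I expect the main obstacle to be precisely this production step: guaranteeing enough big primes inside one $11$-local subgroup, equivalently enough non-adjacencies in the prime graph of $D$, to trigger the coclique/classification machinery (a configuration such as a $4$-cycle among four big primes would be triangle-free yet have no $3$-coclique, so the bound $\sigma\le 2$ from Lemma~\ref{PKTRJ4J4} alone is not enough). The delicate point is that the big primes could a priori have large multiplicity in $|D|$, allowing an order-$11$ element to act fixed-point-freely on some of their Sylow subgroups; controlling this via the coprime action of $P\cong 11^{1+2}$ on $D$ (using $P$-invariant Sylow $p$-subgroups of $D$) and the Frobenius-action Lemmas~\ref{LemmaMazurov} and~\ref{fact} is where the real work lies.
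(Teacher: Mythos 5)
Your reduction is sound as far as it goes: under the assumption $11\notin\pi(|G|/|S|)$ one does get that $D=C_G(B/A)$ is an $11'$-group with $G/D\cong J_4$, that a Sylow $11$-subgroup of $G$ is isomorphic to $11^{1+2}$, and that at least four of $23,29,31,37,43$ divide $|D|$; your closing observation (a $J_4$ composition factor of $C=C_G(x)$ together with the central element $x$ of order $11$ and $\Out(J_4)=1$ forces $11^4\mid|C|$) is also an essentially valid way to finish. But there is a genuine gap at exactly the point you flag yourself: you never produce a \emph{single} element $x$ of order $11$ whose centralizer contains three pairwise non-adjacent primes from $\{23,29,31,37,43\}$, which is what Lemma~\ref{FactorCoclique} requires. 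Coprime action of $P\cong 11^{1+2}$ on a $P$-invariant Sylow $p$-subgroup of $D$ shows only that for each such prime $p$ \emph{some} element of order $11$ in $P$ has nontrivial fixed points (since $P$, being non-cyclic of exponent $11$, cannot act Frobeniusly), and these elements may a priori be different for different $p$; nothing in your sketch rules that out, so the ``suitable $11$-local subgroup'' is not shown to exist.

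The paper closes this gap by a global count rather than by local analysis inside $D$: since $J_4$ has exactly two conjugacy classes of elements of order $11$, $\Out(J_4)=1$, and under the assumption all of the $11$-part of $|G|$ sits in the single composition factor $S$, the group $G$ has at most two classes of elements of order $11$. Every prime of $\pi$ is adjacent to $11$ in the complete graph $GK(G)$, so the six primes of $\pi_1\setminus\{11\}$ distribute over at most two centralizers $C_G(x)$ and $C_G(y)$, and the pigeonhole principle yields $|\theta|\ge 3$ for one of them. Note also that even this is not immediately enough: with $|\theta|=3$ Lemma~\ref{FactorCoclique} gives only a factor $H$ with $|\pi(H)\cap\theta|\ge 2$, and since $5$ may lie in $\theta$ this does not yet force $H\cong J_4$; the paper needs a further argument transporting $H$ into $\pi(C_G(y))$ to bootstrap to $|\theta|\ge 5$, $|\pi(H)\cap\theta|\ge 4$ and $11\notin\pi(H)$, after which Zavarnitsine's classification gives the contradiction. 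So the conjugacy-class count you are missing is not a routine detail but the engine of the whole proof.
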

\begin{proof}
Assume that $11\not\in\pi(|G|/|S|)$.
The group $S$ contains two conjugacy classes of elements of order $11$. Since $11\not\in\pi(|G|/|S|)$, $G$ does not contains more than two
conjugacy classes of elements of order $11$.

 If $G$ contains the only conjugacy class of elements of order $11$, then we receive a contradiction by the same way as in the proof of Lemma~\ref{silcik}. Thus, we can assume that in $G$ there are exactly two conjugacy classes of elements of order $11$.

Let $x,y\in G$ such that $|x|=|y|=11$ and $x\not\in y^G$. We have $\pi\subset
\pi(C_G(x))\cup\pi(C_G(y))$. Let $|\pi(C_G(x))\cap\pi_1|\geq|\pi(C_G(y))\cap\pi_1|$. Put $\theta=\pi(C_G(x))\cap\pi_1\setminus\{11\}$. We have $|\theta|\geq 3$ and the vertices from $\theta$ are pairwise non-adjacent in $GK(C_G(x))$. Therefore by Lemma~\ref{FactorCoclique} there exists a nonabelian composition factor $H$ of  $C_G(x)$ such that $H$ is simple and $|\pi(H)\cap\pi_1|\geq|\theta|-1$.

Put $r$ to be a number such that $H$ is a section of $C_r$.

 Suppose that $r \ge s$, and let $\tilde{x}$ be the image of $x$ in $G_s$. Let $\tilde{H}<G_s$ be a minimal preimage of $H$ in $G_s$  such that $\tilde{H} \le C_{G_s}(\tilde{x})$. In view of Lemma~\ref{spectra}, we have $C_S(\tilde{x})$ is a $\{2,3,11\}$-group and so, in view of \cite{Herzog}, $C_S(\tilde{x})$ is solvable. Thus, $\tilde{H} \not \le S$. Note that $S$ is characteristic in $C_s$ and therefore is normal in $G_s$. Now consider $R=S\tilde{H}$, which is a preimage of $H$ in $G_s$, and note that $R$ contains nonabelian composition factors isomorphic to $S$ and $H$. Consider the factor-group $R/C_R(S)$, which is isomorphic to a subgroup of $Aut(S)\cong J_4$, and note that the order of $C_R(S)$ is coprime to $11$. Thus, in view of the Jordan--Holder theorem, $R/C_R(S)\cong J_4$ and the group $C_R(S)$ contains a nonabelian composition factor which is isomorphic to $H$. Therefore $H$ is a composition factor of $C_{G}(y)$. We get that $|\theta|\geq 5$ and $|\pi(H)\cap\theta|\geq 4$.

Suppose that $r< s$. Let
$\bar{y}$ be the image of $y$ in $G_r$ and $C_r=T_1\times\ldots \times T_k$, where $T_i$ are simple groups. It is easy to see that there is $i$ such that $H$ is a section of $T_i$, and without loss of generality we can assume that $i=1$ and so, $\pi(H) \subseteq \pi(T_1)$.

Note that $\pi(T_1)\subseteq \pi(J_4)$, therefore in view of \cite{Zav}, we have $11\not \in\pi(Out(T_1))$. Moreover, $11 \not \in \pi(T_1)$ and so, $11\not \in \pi(Aut(T_1)$. Therefore $N_{\langle\bar{y}\rangle}(T_1)=C_{\langle\bar{y}\rangle}(T_1)$. If $\bar{y} \not \in C_{G_r}(T_1)$, then consider $$K=\langle T_1^w\mid w \in {\langle \bar{y}\rangle}\rangle.$$ It is easy to see that $C_K(\bar{y}) \cong T_1$. Thus, in any case $\pi(H)\subseteq \pi(C_{G_r}(\bar{y}))$ and so, $\pi(H)\subseteq \pi(C_{G}({y}))$.
It follows that $|\theta|\geq 5$ and $|\pi(H)\cap\theta|\geq 4$.

Now it is easy to see that $11\not\in\pi(|H|)$. In view of \cite{Zav}, there is no a finite
nonabelian simple group $H$ satisfying these conditions. A contradiction.
\end{proof}

Let us prove that there exists a composition factor $T\neq S$, such that $T \cong J_4$.

From Lemma \ref{silcik} it follows that a Sylow $p$-subgroup is not cyclic for any $p\in \pi_1\cup\pi_2$. Therefore
$(\pi \setminus\{11\})\subseteq \pi(|G|/|S|)$, and Lemma~\ref{onze} implies  that $11\in \pi(|G|/|S|)$. Thus, $\pi \subseteq \pi(|G|/|S|)$.

In view of Lemmas~\ref{SolvSubrg}, \ref{keller}, and~\ref{PKTRJ4J4}, there exists a composition factor $T_1$ of $G$ such that $T_1\neq S$ and at least two primes from the set $\{11, 23, 29, 31, 37, 43\}$ divide $|T_1|$. In view of Lemma~\ref{FactrosWithProp}, $T$ is isomorphic to one of the following groups{\rm:} $L_2(23)$, $M_{23}$, $M_{24}$, $L_{2}(32)$, $U_{3}(11)$, $L_{2}(43)$, $J_4$.

Assume that $T_1$ is isomorphic to one of the groups $L_2(23)$, $M_{23}$, $M_{24}$. In view of \cite{Atlas}, $T_1$ contains a subgroup isomorphic to $23:11$ which is a Hall $\{11,23\}$-subgroup of $T_1$, therefore the corresponding chief factor of $G$ containing $T_1$ belongs to $E_{\{11,23\}}$. Thus, in view of Lemmas~\ref{SolvSubrgHall}, \ref{keller}, and~\ref{PKTRJ4J4}, we conclude that there exists a composition factor $T\neq S$, such that at least two primes from the set $\{29, 31, 37, 43\}$ divide $|T|$. In view of Lemma~\ref{FactrosWithProp}, we conclude that $T \cong J_4$.

Assume that $T_1$ is isomorphic to the group $U_{3}(11)$. In view of \cite{Atlas}, $T_1 \in E_{\{5,11\}}$, therefore the corresponding chief factor of $G$ containing $T_1$ belongs to $E_{\{5,11\}}$. Thus, in view of Lemmas~\ref{SolvSubrgHall}, \ref{keller}, and~\ref{PKTRJ4J4}, we conclude that there exists a composition factor $T\neq S$, such that at least two primes from the set $\{23, 29, 31, 43\}$ divide $|T|$. In view of Lemma~\ref{FactrosWithProp}, we conclude that $T \cong J_4$.

Assume that $T_1$ is isomorphic to the group $L_{2}(43)$. In view of \cite{Atlas2}, $T_1 \in E_{\{7,43\}}$, therefore the corresponding chief factor of $G$ containing $T_1$ belongs to $E_{\{7,43\}}$. Thus, in view of Lemmas~\ref{SolvSubrgHall}, \ref{keller}, and~\ref{PKTRJ4J4}, we conclude that there exists a composition factor $T\neq S$, such that at least two primes from the set $\{23, 29, 31, 37\}$ divide $|T|$. In view of Lemma~\ref{FactrosWithProp}, we conclude that $T \cong J_4$.

Assume that $T_1$ is isomorphic to the group $L_{2}(32)$. Note that  $\pi(T_1) \cap \{5,7,23,29,31,37,43\} =\{31\}$ in view of \cite{Atlas}. Thus, in view of Lemmas~\ref{SolvSubrg}, \ref{keller}, and~\ref{PKTRJ4J4}, we conclude that there exists a composition factor $T_2\neq S$, such that at least two primes from the set $\{5, 23, 29, 37, 43\}$ divide $|T_2|$. In view of Lemma~\ref{FactrosWithProp2}, we conclude that $T_2$ is isomorphic to one of the following groups{\rm:} $M_{23}$, $M_{24}$, $L_2(29)$, $U_{3}(11)$, $J_4$. The cases when $T_2$ is isomorphic to $M_{23}$, $M_{24}$, or $U_{3}(11)$ were considered above. If $T_2 \cong L_2(29)$, then in view of \cite{Atlas}, $T_2 \in E_{\{7,29\}}$. Therefore the corresponding chief factor of $G$ containing $T_2$ belongs to $E_{\{7,29\}}$. Thus, in view of Lemmas~\ref{SolvSubrgHall}, \ref{keller}, and~\ref{PKTRJ4J4}, we conclude that there exists a composition factor $T\neq S$, such that at least two primes from the set $\{23, 31, 37, 43\}$ divide $|T|$. In view of Lemma~\ref{FactrosWithProp}, we conclude that $T \cong J_4$.

\bigskip

Assume that $T \le C_t$, where $t$ is the minimal number such that $C_t$ contains a compositional factor which is isomorphic to $J_4$ and distinct from $S$. Without loss of generality we can assume that $t \ge s$.

Assume that $t=s$. Note that $C_s$ contains no more than two distinct compositional factors of $G$ whose are isomorphic to $J_4$. Thus, $S\times T$ is a characteristic subgroup of $C_s$ and so,  $G$ has a chief factor isomorphic to $S \times T$.

Now assume that $t>s$. In this case $S$ is a characteristic subgroup of $C_s$ and so, $S$ is normal in $G_s$. We have $G_s=N_{G_s}(S)$, $C_{G_s}(S)$ is a normal subgroup in $G_s$, and $G_s/C_{G_s}(S)$ is isomorphic to a subgroup of $Aut(S) \cong J_4$. Thus, in view of the Jordan--Holder theorem, $T$ is a composition factor of $C_{G_s}(S)$. Moreover, $SC_{G_s}(S)=S \times C_{G_s}(S)$ is a normal subgroup of $G_s$.

Hence, in any case there exists a normal subgroup $H$ of $G$ such that $\overline{G}=G/H$ has a normal subgroup $\overline{A}$ isomorphic to $J_4\times J_4$.

\medskip

Let us prove that $\overline{G}\cong J_4\times J_4$.  It is easy to see that $C_{\overline{G}}(\overline{A})$ is trivial. Otherwise if a prime $p_1$ divides $|C_{\overline{G}}(\overline{A})|$, then the group $\overline{G}$ contains an element of order $p_1 p_2 p_3$, where $p_2$ and $p_3$ are primes from the set $\{29, 31, 37, 43\}$, and $|\{p_1, p_2, p_3\}|=3$. A contradiction.
Note that $Aut(A) \cong J_4 \wr C_2 \cong (J_4 \times J_4).2$. Thus, $\overline{G}$ is isomorphic to either $J_4\times J_4$ or $J_4 \wr C_2$. Now it is easy to see that $32 \in \omega(J_4 \wr C_2)$, and in view of Lemma~\ref{spectra}, we have $\overline{G} \cong J_4 \times J_4$.

\medskip

Assume that $H$ is non-trivial. If $H$ is solvable, then there exists a normal subgroup $H_1$ of $G$ such that $H_1 \le H$ and $H/H_1$ is elementary abelian. It is easy to see that $$\omega(J_4\times J_4)=\omega(G/H) \subseteq \omega(G/H_1) \subseteq \omega(G)=\omega(J_4\times J_4).$$ Thus, we obtain a contradiction to Lemma~\ref{Extensions}.

Assume that $H$ is non-solvable. In view of the Feit-Thompson theorem \cite{FeitThompson},  $|H|$ is even. Let $S$ be a Sylow $2$-subgroup of $H$. Using the Frattini argument we conclude that $G=N_G(S)H$ and so, $N_G(S)/N_H(S)\cong G/H \cong J_4\times J_4$. Note that $N_H(S)$ is a non-trivial solvable subgroup of $N_G(S)$. Moreover, $$\omega(J_4\times J_4) \subseteq \omega(N_G(S)) \subseteq \omega(G) = \omega(J_4\times J_4).$$ So, we receive a contradiction as above.
Thus, $G \cong J_4 \times J_4$.

\section{Acknowledgements}

The first author is supported by  Russian Foundation for Basic Research (project
18-31-20011).

\bigskip

\bigskip

Ilya~B. Gorshkov

Sobolev Institute of Mathematics SB RAS

Novosibirsk, Russia

E-mail address: ilygor8@gmail.com

\medskip

Natalia~V. Maslova

Krasovskii Institute of Mathematics and Mechanics UB RAS

Ural Federal University

Yekaterinburg, Russia

E-mail address: butterson@mail.ru

ORCID: 0000-0001-6574-5335

\end{document}